\newtheorem{theo}{Theorem}
\newtheorem{coro}{Corollary}
\newtheorem{prop}{Proposition}
\newtheorem{lemm}{Lemma}
\theoremstyle{remark}
\newtheorem{rema}{\bf Remark}
\begin{document}

\title{Extending finite free actions of surfaces}

%\author{Angel Carocca}
\author{Rub\'en A. Hidalgo}
\address{Departamento de Matem\'atica y Estad\'{\i}stica, Universidad de La Frontera. Temuco, Chile}
%\email{angel.carocca@ufrontera.cl}
\email{ruben.hidalgo@ufrontera.cl}

\thanks{Partially supported by Project Fondecyt 1230001}

\subjclass[2020]{Primary 57M60, 57M10}
\keywords{Surfaces, Homeomorphisms, $3$-manifolds}

%%%%%%%%%%%%%%%%%
%%%%%%%%%%%%%%%%%

\begin{abstract}
We prove the existence of finite groups acting freely as orientation-preserving homeomorphisms on closed orientable surfaces which extend as a group of homeomorphisms of some compact orientable $3$-manifold but which cannot extend to a handlebody.  This solves a basic problem in low-dimensional equivariant topology going back to the work of Reni and Zimmermann in the mid 1990s

\end{abstract}

\maketitle

%%%%%%%%%%%%%%%%%
%%%%%%%%%%%%%%%%%
\section{Introduction}
Every closed orientable surface $S$ of genus $g$ is null-cobordant, that is, it can be seen as the boundary of some compact orientable $3$-manifold (for instance, as the boundary of a handlebody of genus $g$). Let $G$ be a finite group of homeomorphisms of $S$. One says that $G$ {\it extends} if it is possible to find some compact orientable $3$-manifold $M^{3}$ with boundary $S$ admitting a group of homeomorphisms, isomorphic to $G$, whose restriction to $S$ coincides with $G$. In the case that $M^{3}$ can be chosen to be a handlebody, then we say that $G$ {\it extends to a handlebody}. A natural question is if every extendable action necessarily extends to a handlebody (it seems this question is due to B. Zimmermann and M. Reni).

If $g=0$, then $G$ always extends to the closed $3$-ball (a handlebody of genus zero). 
Let us assume $g \geq 1$. A {\it Schottky system of loops} for $G$ is a collection ${\mathcal F}$, of pairwise disjoint essential simple loops on $S$, such that: (i) ${\mathcal F}$ is $G$-invariant and (ii) $S \setminus {\mathcal F}$ consists of planar surfaces.  As a consequence of the equivariant loop theorem \cite{M-Y}, if $G$ extends to a handlebody, then there exists a Schottky system of loops for $G$. The converse is also true, that is, $G$ extends to a handlebody if and only if a Schottky system of loops exists for $G$ (see \cite{H-M} for proof of this fact in terms of Kleinian groups).

If $G$ consists only of orientation-preserving homeomorphisms, then the following is known (see, for instance, \cite{H1, H2,H4, RZ}):
(i) if $S/G$ has genus zero and exactly three cone points, then it cannot be extended to a handlebody,
(ii) if $G$ acts freely (i.e., the $G$-stabilizer of every point of $S$ is trivial) and $S/G$ has genus one, then it extends to a handlebody,
(iii) if $G$ acts freely and it is isomorphic to either an abelian group or one of the Platonic symmetry groups, then it extends to a handlebody,
(iv) if $G$ is a dihedral group, then it extends to a handlebody.
In \cite{CH}, is studied the case when $G$ is a cyclic group generated by an orientation-reversing homeomorphism. 

In \cite{GZ}, it was proved that there are Hurwitz actions of $G={\rm PSL}_{2}(q)$ (i.e., $G$ consists of orientation-preserving homeomorphisms and the quotient orbifold $S/G$ has genus zero and exactly three cone points of orders $2$, $3$ and $7$) which extend. As these actions cannot extend to a handlebody (as noted above), these provide examples that answer negatively the above extension question under the presence of fixed points. 
 
In this paper, for the case of free actions, we observe that there are free actions that extend but not to a handlebody (Theorem \ref{main2} and Corollary \ref{main2}), so again providing a negative answer to the above question. 
 
The main ideas are described below.
Let us assume that $G$ acts freely by orientation-preserving homeomorphisms on $S$ of genus $g \geq 2$ and let $R=S/G$ of genus $\gamma \geq 2$. 
The free action of $G$ on $S$ is defined by a surjective homomorphism $\theta: F \to S$, where $F=\pi_{1}(R)$.

In \cite{DS}, Dominguez and Segovia proved that if $G$ is either the alternating or the symmetric group, then it always extends. Note that these groups have the property that their Bogomolov multiplier $B_{0}(G)$ is zero. Generalizing these examples, in \cite{Samperton}, Samperton proved that if $B_{0}(G)=0$, then any free action of $G$ extends. In the same paper, it was proved that if $B_{0}(G) \neq 0$ and $G$ does not contain a dihedral subgroup, then it admits a free action that does not extend (for instance, $G={\rm SmallGroup}(3^{5},28)$ in the Gap Library \cite{GAP}). 

As a consequence of the above, if $B_{0}(G)=0$ and there is not a Schottky system of loops for $G$, then the free action of $G$ does extend but not to a handlebody, providing in this way the existence of free actions as desired.

One can use the HAP package, implemented in GAP \cite{GAP}, to verify if the Bogomolov multiplier of group $G$ is zero (in Section \ref{semidirecto}, we recall some general results to guarantee that the Bogomolov multiplier is zero for a certain class of groups). 
If $G$ has either (i) odd order or (ii) even order but with a unique element of order two, the existence or non-existence of a Schottky system of loops for $G$ can be read from $\theta$ (see Theorem \ref{main0} for $\gamma=2$ and Theorem \ref{main00} for $\gamma \geq 3$). 
If $\gamma=2$,  in which case, $F=\langle x_{1},y_{1},x_{2},y_{2}: [x_{1},y_{1}][x_{2},y_{2}]=1 \rangle$, then this existence result reads as follows.
If ${\mathfrak C}$ is the orbit of the commutator $[x_{1},y_{1}]$ under certain (explicit) subgroup ${\rm Out}_{0}^{+}(F)$ of ${\rm Aut}^{+}(F)$ (see Section \ref{geometrico}), then 
$G$ admits a Schottky system of loops (i.e., it extends to a handlebody) if and only if $\ker(\theta) \cap {\mathfrak C} \neq \emptyset$.
The collection ${\mathfrak C}$ is infinite, but, as $F$ contains a finite number of subgroups of a fixed finite index, there is a finite subcollection ${\mathfrak C}_{G}$ of ${\mathfrak C}$  such that $\ker(\theta) \cap {\mathfrak C} \neq \emptyset$ if and only if $\ker(\theta) \cap {\mathfrak C}_{G} \neq \emptyset$. To explicitly describe ${\mathfrak C}_{G}$ for a given $G$ is not a simple task, and we do not attempt to do so in this paper.

So, let us now assume that $G$ is a non-abelian group of odd order and $[x_{1},y_{1}] \notin \ker(\theta)$ (there are plenty of examples with this property; examples are provided in Section \ref{Sec:ejemplos}). 

If $N$ is the intersection of all the ${\rm Aut}^{+}(F)$-images of $\ker(\theta)$ (a finite index normal subgroup of $F$), then $N \cap {\mathfrak C}=\emptyset$ (see Corollary \ref{estrategia}). So, by Theorem \ref{main0}, this normal subgroup $N$ provides us with a free action of  $G_{N}=F/N$ as a group of orientation-preserving homeomorphisms of a closed orientable surface $S_{N}$ such that $S_{N}/G_{N}=R$ and which does not extend to a handlebody (note that there is a normal subgroup $H<G_{N}$ with $S=S_{N}/H$ and $G=G_{N}/H$).
Two situations may happen at this point:
either (i) $G_{N}$ does not extend (so, providing more examples as in \cite{Samperton}) or (ii) $G_{N}$ extends but it does not extend to a handlebody (providing a negative answer to Zimmermann's question). We interpret this construction as a fiber product to see that, if we also assume that all of the Sylow subgroups of $G$ are abelian, then 
$B_{0}(G_{N})=0$, providing in this way examples as required.

The author was privately communicated by E. Samperton that, together with M. Boggi and C. Segovia,  they may obtain examples of free actions that extend but not to a handlebody by using different methods \cite{BSS}.

%%%%%%%%%%%%%%%%%%
%%%%%%%%%%%%%%%%%%
\section{Preliminaries}
%%%%%%%%%%%%%
\subsection{Actions on handlebodies and Schottky groups}
A Schottky group $\Gamma$ of rank $g$ is a purely loxodromic Kleinian group, with a non-empty region of discontinuity $\Omega \subset \widehat{\mathbb C}$, isomorphic to the free group of rank $g$. It is known that $\Omega$ is connected (equals to $\widehat{\mathbb C}$ if $g=0$, $\widehat{\mathbb C}$ minus two points if $g=1$, and the complement of a Cantor set if $g \geq 2$). The quotient $\Omega/\Gamma$ is a closed Riemann surface of genus $g$ and $({\mathbb H}^{3} \cup \Omega)/\Gamma$ is a handlebody whose interior carries a complete hyperbolic metric (with injectivity radius bounded away from zero) whose conformal boundary is $\Omega/\Gamma$. 
Koebe's retrosection theorem asserts that every closed Riemann surface can be obtained, up to biholomorphisms, in this way \cite{Koebe}.

Let $G$ be a finite group of homeomorphisms of a closed orientable surface $S$ of genus $g\geq 2$. By the Nielsen realization theorem \cite{Kerckhoff}, we may provide to $S$ of a Riemann surface structure making $G$ a group of conformal/anticonformal automorphisms of it. Let us fix one of such a Riemann surface structures on $S$. If $M^{3}$ is a handlebody whose boundary is $S$, then the given Riemann surface structure on $S$ induces to the interior of $M^{3}$ a complete hyperbolic structure (with $S$ as its conformal boundary). This is equivalent to have a Schottky group $\Gamma$ of rank $g$, with region of discontinuity $\Omega \subset \widehat{\mathbb C}$, such that $S=\Omega/\Gamma$ and $M^{3}=({\mathbb H}^{3} \cup \Omega)/\Gamma$.  To say that $G$ extends to the handlebody $M^{3}$ is, in this setting, equivalent to the existence of a Kleinian group $K$ containing $\Gamma$ as a finite index normal subgroup such that the action of $G$ is represented by the quotient group $K/\Gamma$. In this case, one says that $K$ is a {\it virtual (extended) Schottky group} (if $G$ does not contain orientation reversing homeomorphisms, then we say that $K$ is a {\it virtual Schottky group}). The finite index condition asserts that $K$ and $\Gamma$ both have the same region of discontinuity. A geometrical picture of these types of groups, in terms of the Klein-Maskit combination theorems \cite{Maskit:Comb}, was provided in \cite{H3}. If $G$ acts freely on $S$ and contains no dihedral subgroups, such a picture is quite simple and is given as follows.

\begin{theo}[\cite{H3}]\label{picturevirtual}
Let $K$ be a virtual (extended) Schottky group containing as a finite index normal subgroup a Schottky group $\Gamma$ of rank $g \geq 2$. Assume that the group $K/\Gamma$ does not contain dihedral subgroups and that it acts freely on $\Omega/\Gamma$, where $\Omega$ is the region of discontinuity. Then $K$ is the free product, in the sense of the Klein-Maskit combination theorem, of 
(i) $\alpha \geq 0$ cyclic groups generated by loxodrmic elements $A_{j}$, 
(ii) $\alpha' \geq 0$ cyclic groups generated by pseudo-reflection elements $B_{j}$, 
(iii) $\beta \geq 0$ abelian groups, each one generated by an elliptic element $E_{j}$ (of some finite order $n_{j} \geq 2$) together a loxodromic element $C_{j}$ such that $C_{j}E_{j}C_{j}^{-1}=E_{j}$, and
(iv) $\beta' \geq 0$ groups, each one generated by an elliptic element $F_{j}$ (of some finite order $m_{j} \geq 2$) together a pseudo-reflection $D_{j}$ such that $D_{j}F_{j}D_{j}^{-1}=F_{j}^{-1}$.  
We say that $K$ has signature $(\alpha,\alpha',\beta,\beta')$.
\end{theo}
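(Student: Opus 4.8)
The plan is to treat $K$ as a finitely generated function group and recover the decomposition from the Klein--Maskit combination theorems \cite{Maskit:Comb}. Since $\Gamma$ has finite index in $K$, both groups have the same region of discontinuity $\Omega$, and hence the same limit set $\Lambda=\widehat{\mathbb C}\setminus\Omega$, which for a rank $g\geq 2$ Schottky group is a Cantor set. Thus $\Omega$ is a connected $K$-invariant component, $K$ is a function group, and $\Omega/K=(\Omega/\Gamma)/(K/\Gamma)=S/G$. As $G$ acts freely on $S$, the quotient $\Omega/K$ is a closed surface without cone points or mirror boundary (possibly non-orientable if $K$ contains orientation-reversing elements).

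First I would pin down the torsion of $K$. Because $\Gamma$ is torsion free and purely loxodromic, every non-loxodromic $k\in K$ projects to a non-trivial element of $G$, so by freeness it acts without fixed points on $\Omega$; consequently its fixed-point set on $\widehat{\mathbb C}$ lies entirely in $\Lambda$. This excludes genuine reflections (whose fixed circle meets $\Omega$) and forces every orientation-reversing element of finite order to be a pseudo-reflection, i.e. an order-two element with empty fixed-point set on $\widehat{\mathbb C}$; the orientation-preserving torsion is elliptic with both fixed points in $\Lambda$. In particular each maximal elementary subgroup of $K$ is built from loxodromics, axis-sharing elliptics, and pseudo-reflections only.

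Next I would reverse the combination theorems to obtain the free product. Since $\Lambda$ is totally disconnected, no factor subgroup can have connected limit set, which rules out quasifuchsian and degenerate factors; hence every factor subgroup is elementary. Concretely I would select a $K$-invariant system of pairwise disjoint defining loops in $\Omega$ (equivalently, a complete system of compressing discs and the corresponding structure loops in the associated orbifold handlebody $(\mathbb{H}^3\cup\Omega)/K$) and cut along it; by the combination theorems this exhibits $K$ as the free product, in the Klein--Maskit sense, of the stabilisers of the resulting pieces. By the torsion analysis each stabiliser is one of: a cyclic loxodromic group (type (i)); a cyclic group of order two generated by a pseudo-reflection (type (ii)); the abelian group generated by an elliptic and a loxodromic sharing its axis, giving $\mathbb{Z}/n_j\times\mathbb{Z}$ (type (iii)); or the group generated by an elliptic together with a pseudo-reflection inverting it (type (iv)). The hypothesis that $G$ carries no dihedral subgroup is what removes the remaining elementary possibilities, in particular the factors generated by two pseudo-reflections, which are infinite dihedral and would project to a dihedral subgroup of $G$. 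Counting the factors of each kind defines the signature $(\alpha,\alpha',\beta,\beta')$.

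The main obstacle is this middle step: showing that the recombination is genuinely a free product of elementary factors, with no amalgamation over a non-trivial subgroup and no accidental parabolics. This is exactly where the Schottky hypothesis is decisive, since the total disconnectedness of $\Lambda$ together with the fact that $\Omega/\Gamma$ is a single closed surface of genus $g$ forbids any factor with connected limit set and keeps the pieces disjoint, while the geometric finiteness of $K$ (a finite extension of the geometrically finite $\Gamma$) guarantees that the process terminates after finitely many steps. The remaining technical point is to choose the defining system of loops $G$-equivariantly so that $G=K/\Gamma$ permutes the pieces consistently; this is arranged by working with the hyperbolic structure provided by the Nielsen realization theorem, after which the stabilisers assemble to give $K$ with the stated signature.
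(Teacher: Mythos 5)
First, a point of comparison: the paper never proves Theorem \ref{picturevirtual} at all --- it is quoted from \cite{H3} --- so your argument has to stand on its own. It does not: it has one fatal structural gap and one genuine error in the classification of factors.

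The structural gap is the step you yourself call ``the main obstacle''. The Klein--Maskit combination theorems operate only in the constructive direction: given factor groups and a system of loops satisfying their hypotheses, they yield discreteness and the free-product structure. The theorem requires the opposite, deconstructive direction: for an arbitrary $K$ satisfying the hypotheses one must \emph{produce} a $K$-invariant system of loops whose piece stabilizers are elementary and which realizes $K$ as a Klein--Maskit free product. That existence statement is the entire content of the theorem, and you obtain it by fiat (``I would select a $K$-invariant system of pairwise disjoint defining loops\ldots''). Moreover, the mechanism you offer for why the factors are elementary and the combination is an honest free product --- total disconnectedness of $\Lambda$ --- cannot be the operative one. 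Every virtual (extended) Schottky group has the same Cantor limit set as its finite-index Schottky subgroup, including those for which $K/\Gamma$ acts with fixed points or contains dihedral subgroups; for those, the structure theorem of \cite{H3} produces factor types outside your list (for example, $K\cong{\mathbb Z}_{2}*{\mathbb Z}_{2}*{\mathbb Z}_{2}$ generated by three half-turns with fixed points in $\Omega$, realizing the hyperelliptic involution of a genus-two surface) and, in general, combinations amalgamated over nontrivial finite cyclic subgroups. So the Cantor limit set rules out quasifuchsian and degenerate pieces but decides neither the factor types nor the triviality of the amalgamations; that is the role of the freeness and no-dihedral hypotheses, which your sketch never brings to bear on the decomposition step. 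Closing this gap means either invoking Maskit's structure theorem for function groups for $K^{+}$, proving that freeness kills all cyclic amalgamations, and then extending the decomposition across the orientation-reversing part of $K$ (which is exactly where types (ii) and (iv) arise), or else running an equivariant loop-theorem argument in $({\mathbb H}^{3}\cup\Omega)/K$; either route is the actual substance of \cite{H3}.

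The error is in your torsion analysis and the factor list built on it. You decree that every orientation-reversing element of finite order has order two; this ignores rotatory reflections (orientation-reversing maps with elliptic square), which are also fixed-point free on $\widehat{\mathbb C}$ and hence not excluded by freeness. Worse, you read ``pseudo-reflection'' as an order-two element throughout. Under that reading your type (iv) factor $\langle F_{j},D_{j}\rangle$ is a finite dihedral group of order $2m_{j}$; since $\Gamma$ is torsion free, every finite subgroup of $K$ embeds in $G=K/\Gamma$, so such a factor is flatly impossible under the no-dihedral hypothesis --- your list is inconsistent with the hypotheses of the very theorem you are proving. Your list also provably misses factors: take $K=\langle B_{1}\rangle * \langle B_{2}\rangle$, a Klein--Maskit free product of two cyclic groups generated by glide-type pseudo-reflections (orientation reversing, fixed-point free on $\widehat{\mathbb C}$, with loxodromic squares). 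Here $\Gamma=K^{+}$ is a rank-three Schottky group, $K/\Gamma\cong{\mathbb Z}_{2}$ acts freely, there is no dihedral subgroup, and $K$ is torsion free; since every factor in your list other than type (i) contains torsion, your classification would force $K$ to be a free product of loxodromic cyclic groups, hence orientation preserving --- a contradiction. The pseudo-reflections of the theorem are (in type (iv), and necessarily in type (ii) to account for this example) orientation-reversing elements of infinite order, and your argument cannot produce them.
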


\begin{rema}
If the virtual (extended) Schottky group $K$ has signature $(\alpha,\alpha',\beta,\beta')$, then $\Omega/K$ is a closed surface being the connected sum of $\alpha+\beta$ tori and $2(\alpha'+\beta')$ real projective planes ($K$ is virtual Schottky group if and only if $\alpha'=\beta'=0$). If $\Gamma$ is a Schottky group of rank $g$, being a finite index normal subgroup of $K$, then the free action of $G=K/\Gamma$ on $S=\Omega/\Gamma$ extends to the handlebody $M^{3}=({\mathbb H}^{3} \cup \Omega)/\Gamma$ and $M^{3}/G=({\mathbb H}^{3} \cup \Omega)/K$ is topologically a handlebody and its conical locus (in its interior) consist of $\beta+\beta'$ pairwise disjoint (unlinked) simple loops (their cone orders being $n_{1},\ldots,n_{\beta}, m_{1},\ldots,m_{\beta'}$).
\end{rema}

%%%%%%%%%%%%%%%
\subsection{The Bogomolov multiplier}\label{semidirecto}
If $G$ is a finite group, then its Schur multiplier is the abelian group $MG):={\rm H}^{2}(G,{\mathbb Q}/{\mathbb Z}) \cong {\rm H}_{2}(G,{\mathbb Z})$ and its Bogomolov multiplier (see \cite{Bogomolov}) is the abelian group $B_{0}=\ker\left[{\rm H}^{2}(G,{\mathbb Q}/{\mathbb Z}) \to  \bigoplus_{A \subset G} {\rm H}^{2}(A,{\mathbb Q}/{\mathbb Z})\right]$, where $A$ runs over all abelian subgroups of $G$. If $M_{0}(G)$ is the subgroup of ${\rm H}_{2}(G,{\mathbb Z})$ generated by the toral classes of $M(G)$, then $B_{0}(G) \cong {\rm H}_{2}(G,{\mathbb Z})/M_{0}(G)$.

In \cite{Saltman}, Saltman observed that finite groups with non-zero Bogomolov multiplier provide counterexamples of Noether’s problem to the rationality of fields of invariants. In \cite{Bogomolov}, Bogomolov provided examples of $p$-groups of order $p^{9}$ with a non-zero Bogomolov multiplier.

Let $G=F_{n}/N$, where $F_{n}$ denotes the free group of rank $n$ and $N$ is a finite index normal subgroup of $F_{n}$.
In \cite{Hopf}, Hopf proved that $$M(G) \cong \frac{[F_{n},F_{n}] \cap N}{[F_{n},N]},$$
where $[F_{n},F_{n}]$ is the derived subgroup of $F_{n}$ and
$[F_{n},N]$ is the normal subgroup generated by the elements of the form $aba^{-1}b^{-1}$, $a \in F_{n}$ and $b \in N$, and later, in \cite{Moravec} (see also \cite{Sumana}), Moravec obtained a Hopf-type formula for computing $B_{0}(G)$
$$B_{0}(G) \cong \frac{[F_{n},F_{n}] \cap N}{\langle K(F_{n}) \cap N\rangle},$$
where  $K(F_{n})$ is the set of all the commutators of $F_{n}$.

The HAP package ({\rm http:// hamilton.nuigalway.ie/Hap/www/}), implemented in GAP \cite{GAP}, permits to compute the Bogomolov multiplier of a group $G$ of small order
(using the command BogomolovMultiplier($G$)).
The following facts can be found in, for instance, \cite{Kang,Kang1,Kuy,Michailov1}.
\begin{enumerate}[leftmargin=15pt]
\item $B_{0}(G)=0$ if $G$ is one of the followings:
\begin{enumerate}
\item a symmetric group;
\item a simple group;
\item a $p$-group of order at most $p^{4}$;
\item an abelian-by-cyclic groups (i.e., $G$ contains an abelian group $A$ as a normal subgroup such that $G/A$ is a cyclic group);
\item a primitive supersolvable group;
\item an extraspecial $p$-group (i.e., the center $Z_{G}$ is a cyclic group of order $p$ and $G/Z_{G} \cong {\mathbb Z}_{p}^{2n}$).
\end{enumerate}
\item $B_{0}(G_{1} \times G_{2}) \cong B_{0}(G_{1}) \times B_{0}(G_{2})$.
\item $B_{0}(N \rtimes K) \cong B_{0}(N)^{K} \times B_{0}(K)$ when ${\rm gcd}(|N|,|K|)=1$.
\end{enumerate}

The following result, which will be needed for some of our examples, is a consequence of \cite[Lemma 2.6.]{BMP} (see also \cite[Corollary 2.7]{BMP}).
\begin{lemm}\label{mainlemma0}
Let $G$ be a finite group such that all of its Sylow subgroups have zero Bogomolov multiplier. Then $B_{0}(G)=0$.
\end{lemm}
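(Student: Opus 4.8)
The plan is to detect $B_0(G)$ one prime at a time and to see each primary piece inside a Sylow subgroup. Since $M(G)=H^{2}(G,{\mathbb Q}/{\mathbb Z})$ is a finite abelian group, it splits as the direct sum of its $p$-primary components $M(G)_{(p)}$, and because $B_0(G)$ is by definition the kernel of the natural map $H^{2}(G,{\mathbb Q}/{\mathbb Z})\to\bigoplus_{A}H^{2}(A,{\mathbb Q}/{\mathbb Z})$ (the sum over abelian $A\subset G$), it is a subgroup of $M(G)$ and hence inherits the decomposition $B_0(G)=\bigoplus_{p}\big(B_0(G)\cap M(G)_{(p)}\big)$. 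It therefore suffices to show that $B_0(G)\cap M(G)_{(p)}=0$ for every prime $p$ dividing $|G|$.

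Fix such a $p$ together with a Sylow $p$-subgroup $P\leq G$. First I would invoke the classical transfer--restriction fact: the composite $\mathrm{cor}^{G}_{P}\circ\mathrm{res}^{G}_{P}$ is multiplication by the index $[G:P]$, which is prime to $p$ and hence invertible on the $p$-primary component, so $\mathrm{res}^{G}_{P}\colon M(G)_{(p)}\to M(P)$ is injective. The next step is to verify that restriction is compatible with the definition of the Bogomolov multiplier, that is, $\mathrm{res}^{G}_{P}\big(B_0(G)\big)\subseteq B_0(P)$; this is precisely the content of \cite[Lemma 2.6]{BMP}, and the check is short. Every abelian subgroup $A\leq P$ is also an abelian subgroup of $G$, and restriction is transitive, $\mathrm{res}^{P}_{A}\circ\mathrm{res}^{G}_{P}=\mathrm{res}^{G}_{A}$; so if $x\in B_0(G)$ then $\mathrm{res}^{G}_{A}(x)=0$ for all abelian $A\leq G$, whence $\mathrm{res}^{P}_{A}\big(\mathrm{res}^{G}_{P}(x)\big)=0$ for all abelian $A\leq P$, which says exactly that $\mathrm{res}^{G}_{P}(x)\in B_0(P)$.

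Combining the two steps closes the argument: $\mathrm{res}^{G}_{P}$ restricts to an injection $B_0(G)\cap M(G)_{(p)}\hookrightarrow B_0(P)$, and by hypothesis $B_0(P)=0$, so $B_0(G)\cap M(G)_{(p)}=0$. As this holds for every $p$, the direct-sum decomposition yields $B_0(G)=0$. The only genuinely delicate point is the naturality statement $\mathrm{res}^{G}_{P}\big(B_0(G)\big)\subseteq B_0(P)$; once it is granted via \cite[Lemma 2.6]{BMP}, the remainder is the standard reduction of a finite abelian cohomology group to its Sylow subgroups, and the same reasoning transports without change to the homological presentation $B_0(G)\cong {\rm H}_{2}(G,{\mathbb Z})/M_{0}(G)$, since the restriction maps used above are dual to corestriction in homology.
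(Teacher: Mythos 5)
Your proof is correct. Note that the paper does not actually prove this lemma: it states it as a direct consequence of \cite[Lemma 2.6]{BMP} (see also \cite[Corollary 2.7]{BMP}) and moves on, so there is no in-paper argument to compare against. What you wrote is precisely the standard argument underlying that citation, and it is complete: the primary decomposition $B_{0}(G)=\bigoplus_{p}\bigl(B_{0}(G)\cap M(G)_{(p)}\bigr)$ is valid because $B_{0}(G)$ is a subgroup of the finite abelian group $M(G)$; the injectivity of $\mathrm{res}^{G}_{P}$ on $M(G)_{(p)}$ follows from $\mathrm{cor}^{G}_{P}\circ\mathrm{res}^{G}_{P}=[G:P]$ with $[G:P]$ prime to $p$; and the inclusion $\mathrm{res}^{G}_{P}\bigl(B_{0}(G)\bigr)\subseteq B_{0}(P)$ is correctly reduced to transitivity of restriction together with the fact that abelian subgroups of $P$ are abelian subgroups of $G$. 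In effect, you have supplied a self-contained proof of exactly the result the paper delegates to the reference; the only cosmetic redundancy is that you both invoke \cite[Lemma 2.6]{BMP} for the naturality step and then prove it yourself, and the proof would stand with the citation removed.
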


%%%%%%%%%%%%%%%%
\subsection{Samperton's theorem}
Each finite group $G$ can be realized as a group of orientation-preserving homeomorphisms of some closed orientable surface that acts freely. 
If the free action of $G$ on $S$ extends to a compact manifold $M$, then it might happen that in such extension the action is no longer free. In that case, if the $G$-stabilizer of every point is cyclic, then one says that the extension is non-singular (and that $G$ extends non-singularly).

In \cite{Samperton}, Samperton observed that $B_{0}(G)$ provides an obstruction for the extendability of (all possible) free actions of $G$.

Let us denote by $D_{n}$, where $n \geq 2$, the dihedral group of order $2n$, by ${\mathcal A}_{4}$ and ${\mathcal A}_{5}$ the alternating groups of orders $12$ and $60$, respectively, and by  ${\mathcal S}_{4}$ the symmetric group in $4$ letters.

\begin{theo}[Samperton \cite{Samperton}]\label{teo1}
Let $G$ be a finite group. Then
\begin{enumerate}[leftmargin=15pt]
\item  Every free action of $G$ on a closed orientable surface extends non-singularly if and only if $B_{0}(G)=0$.  
\item If $B_{0}(G) \neq 0$ and $G$ does not contain a subgroup isomorphic to either $D_{n}$, $n \geq 2$, ${\mathcal A}_{4}$, ${\mathcal A}_{5}$ or ${\mathcal S}_{4}$, then $G$ affords free actions on some closed orientable surface that do not extend.
\end{enumerate}
\end{theo}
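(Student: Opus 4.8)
The plan is to reformulate a free orientation-preserving action of $G$ on a closed orientable surface $S$ as an epimorphism $\theta:\pi_{1}(R)\to G$, where $R=S/G$ is a closed orientable surface of genus $\gamma$, and to attach to it the homology class $c(\theta):=\theta_{*}[R]\in H_{2}(G;\mathbb{Z})$. Since the free action is the same datum as a principal $G$-bundle over $R$, i.e.\ a map $R\to BG$, and since in degree two oriented bordism agrees with homology ($\widetilde{\Omega}_{2}^{SO}(BG)\cong H_{2}(G;\mathbb{Z})$, there being no Atiyah--Hirzebruch obstructions), two free actions give the same class exactly when they are $G$-equivariantly bordant through free actions. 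The whole proof rests on the following assertion, which I would isolate as the main lemma: a free action $\theta$ extends non-singularly to a compact orientable $3$-manifold $M$ with $\partial M=S$ if and only if $c(\theta)$ maps to $0$ in $B_{0}(G)$, equivalently $c(\theta)\in M_{0}(G)$.

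For the "only if" part of the lemma I would start from a non-singular extension $M$, pass to the quotient orbifold $W=M/G$ with $\partial W=R$, whose singular locus is a disjoint union of circles carrying cyclic isotropy. Drilling out equivariant tubular neighborhoods of the preimage circles turns $M$ into a \emph{free} $G$-manifold $M^{\circ}$ whose boundary is $S$ together with tori $T_{1},\dots,T_{r}$, each $T_{i}$ carrying a free $G$-action classified by a commuting pair generating an abelian subgroup of $G$. Reading $M^{\circ}$ as a free $G$-bordism gives $c(\theta)=\sum_{i}c(T_{i})$ in $H_{2}(G;\mathbb{Z})$, and each $c(T_{i})$ is a toral class, so $c(\theta)\in M_{0}(G)$. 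For the "if" part I would run this backwards: writing $c(\theta)=\sum_{i}\varepsilon_{i}c_{i}$ as a signed sum of toral classes, realize each $c_{i}$ by a free action on a torus $T_{i}$ coming from a commuting pair $(a_{i},b_{i})$, and observe that such a torus action always bounds with cyclic isotropy, namely the Dehn filling along the $a_{i}$-curve yields a solid torus whose core is an orbifold circle with stabilizer $\langle a_{i}\rangle$. The equality in $H_{2}(G;\mathbb{Z})$ then furnishes a free $G$-bordism $M^{\circ}$ from $S$ to $\bigsqcup T_{i}$, and gluing in these filled solid tori produces the desired $M$ with $\partial M=S$ and cyclic stabilizers. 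Combining both directions proves (1): if $B_{0}(G)=0$ then $M_{0}(G)=H_{2}(G;\mathbb{Z})$ and every action extends non-singularly, while if $B_{0}(G)\neq0$ one picks a class that is nonzero in $B_{0}(G)$, realizes it by an epimorphism $\theta$ (any class is $\theta_{*}[R]$ for $\gamma$ large enough, surjectivity arranged by adding handles), and the lemma forbids a non-singular extension. I would cross-check the purely algebraic shadow of the lemma against Moravec's Hopf-type formula $B_{0}(G)\cong([F,F]\cap N)/\langle K(F)\cap N\rangle$: with $N$ the kernel of $\pi_{1}(R)\to G$ lifted to the free group, the surface relator $\prod_{i}[x_{i},y_{i}]$ represents $c(\theta)$, and it lies in $\langle K(F)\cap N\rangle$ precisely when it is a product of commutators each killed by $\theta$, matching the filling by tori.

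For (2) the point is that the subgroup hypothesis upgrades "non-singular" to "unconditional". Taking the $G$-action on the (smoothable, orientation-preserving) $3$-manifold to be locally linear, every point stabilizer embeds in $\mathrm{SO}(3)$, and the finite subgroups of $\mathrm{SO}(3)$ are exactly the cyclic groups, the dihedral groups $D_{n}$, and ${\mathcal A}_{4}$, ${\mathcal S}_{4}$, ${\mathcal A}_{5}$. Under the assumption that $G$ contains none of the latter, every stabilizer is forced to be cyclic, so \emph{any} extension of a free action of $G$ is automatically non-singular. Hence the two notions coincide for such $G$, and (2) follows immediately from (1): since $B_{0}(G)\neq0$, part (1) produces a free action with no non-singular extension, which is therefore a free action with no extension at all.

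The main obstacle I anticipate is the proof of the central lemma, and within it the bookkeeping of the "only if" drilling step: one must verify that the oriented bordism class of the drilled manifold really records $c(\theta)$ as an honest signed sum of the toral classes of the boundary tori (matching the orientations of the removed neighborhoods), and that conversely the filled solid tori can be glued back equivariantly with the prescribed cyclic isotropy and compatible orientations. The identification $\widetilde{\Omega}_{2}^{SO}(BG)\cong H_{2}(G;\mathbb{Z})$ and the realizability of every class by a \emph{surjective} $\theta$ are the supporting facts that make the two directions close up; the local-linearity input used in (2) is comparatively soft once smoothability of the action is granted.
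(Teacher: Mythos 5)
The paper does not actually prove this statement: it is imported verbatim from Samperton \cite{Samperton}, so there is no internal argument to compare against. Your reconstruction is, in substance, Samperton's own published proof: encode a free action as an epimorphism $\theta:\pi_{1}(R)\to G$, use the edge isomorphism $\widetilde{\Omega}^{SO}_{2}(BG)\cong H_{2}(G;\mathbb{Z})$ to identify free actions up to free $G$-bordism with their classes $\theta_{*}[R]$, prove that non-singular extendability is equivalent to $\theta_{*}[R]\in M_{0}(G)$ by drilling out the singular circles (only if) and by Dehn-filling tori carrying free actions of abelian subgroups (if), and then deduce (2) from (1) via the classification of finite subgroups of ${\rm SO}(3)$. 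These steps are sound, including the realizability of an arbitrary class by a surjective $\theta$ (add handles sent to pairs $(g_{i},1)$, which achieves surjectivity while contributing only trivial toral classes), and the matching with Moravec's Hopf-type formula is the correct algebraic shadow.

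The one place where your write-up papers over a genuine issue is the parenthetical ``(smoothable, orientation-preserving) \dots locally linear''. The theorem, as stated, concerns extensions by \emph{homeomorphisms} of compact $3$-manifolds; for an arbitrary topological action the stabilizers need not act locally linearly, the singular set need not be a link admitting equivariant tubular neighborhoods, and the stabilizer-in-${\rm SO}(3)$ argument of part (2) does not apply. Both your drilling step and the whole of your part (2) therefore require the smoothing theorem of Pardon (every continuous finite group action on a compact $3$-manifold can be replaced by a smooth one), or some equivalent reduction to the smooth or locally linear category; Samperton invokes exactly this input. With that ingredient named, your outline is a faithful and correct reconstruction of the cited proof; without it, your main lemma's ``only if'' direction and part (2) are established only for smooth extensions, which is strictly weaker than the statement being proved.
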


A consequence, of part (1) of the above theorem, is the following fact (this, in particular, provides the results in \cite{DS})

\begin{coro}
Every free action of a group $G$, of either type (a)-(f) in Section \ref{semidirecto}, extends. 
\end{coro}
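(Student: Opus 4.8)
The plan is to deduce the statement directly, as a formal consequence, by combining the vanishing of the Bogomolov multiplier for the listed families with part (1) of Samperton's theorem. No new construction is needed; the two cited results do all the work.

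First I would recall that, by the facts collected at the beginning of Section \ref{semidirecto}, every group $G$ belonging to one of the families (a)--(f) satisfies $B_{0}(G)=0$. Indeed, items (a)--(f) are precisely the cases of fact (1) of that list, which asserts that $B_{0}(G)$ vanishes for symmetric groups, simple groups, $p$-groups of order at most $p^{4}$, abelian-by-cyclic groups, primitive supersolvable groups, and extraspecial $p$-groups. Thus for each such $G$ we may take the hypothesis $B_{0}(G)=0$ as granted.

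Next I would invoke part (1) of Theorem \ref{teo1}: for a finite group $G$, every free action of $G$ on a closed orientable surface extends non-singularly if and only if $B_{0}(G)=0$. Applying the ``if'' direction with the vanishing established above, every free action of a group $G$ of type (a)--(f) extends non-singularly. Since a non-singular extension is, in particular, an extension in the sense of the Introduction (one only weakens the requirement by forgetting that the point-stabilizers in the $3$-manifold are cyclic), the desired conclusion follows at once.

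There is no genuine obstacle here; the corollary is purely a matter of assembling the two quoted results. The only point worth emphasizing is that the families (a)--(f) already subsume the groups treated in \cite{DS}: symmetric groups appear as type (a); the simple alternating groups ${\mathcal A}_{n}$ with $n\geq 5$ appear as type (b); while ${\mathcal A}_{4}$, which contains the Klein four-group as a normal subgroup with cyclic quotient ${\mathbb Z}_{3}$, and ${\mathcal A}_{3}\cong {\mathbb Z}_{3}$ fall under type (d). Hence the present corollary recovers the extendability results of Dominguez and Segovia while covering several further classes of groups.
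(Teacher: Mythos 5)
Your proposal is correct and matches the paper's own (implicit) argument exactly: the paper states the corollary as an immediate consequence of part (1) of Theorem \ref{teo1} together with the list of groups in Section \ref{semidirecto} having vanishing Bogomolov multiplier, which is precisely your reasoning. Your closing observation that this recovers the results of \cite{DS} is also made by the paper itself.
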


\begin{rema}
(i) The condition that $G$  does not contain a subgroup isomorphic to either $D_{n}$, $n \geq 2$, ${\mathcal A}_{4}$, ${\mathcal A}_{5}$ or ${\mathcal S}_{4}$, in part (2) of the above theorem, is equivalent for it to have at most one element of order two  (if a finite group has two different elements of order two, then these two generate a dihedral group). In particular, this holds if $G$ has odd order.
(ii) If $B_{0}(G) \neq 0$, then there might be both, extendable and non-extendable, free actions of $G$ on surfaces.
In \cite{Samperton}, there is observed that the group $G={\rm SmallGroup}(3^{5},28) \cong ({\mathbb Z}_{9} \rtimes {\mathbb Z}_{9}) \rtimes {\mathbb Z}_{3}$ satisfies  
(2) in Theorem \ref{teo1} ($M(G) \cong {\mathbb Z}_{9}$ and $B_{0}(G) \cong {\mathbb Z}_{3}$), so there exists a free action of it that cannot extend. In Section \ref{Sec:ejemplos}, we consider some free actions of this group and observe that some of them extend to a handlebody.

 \end{rema}

%%%%%%%%%%%%%%%%%%%%%
\subsection{Examples of groups which extends to handlebodies}
As observed in the previous section, if the group $G$ is either (i) abelian, (ii) dihedral, (iii) alternating, (iv) symmetric, or (v) abelian-by-cyclic, then their free actions always extend. In the case of abelian and dihedral groups, the free actions always extend to a handlebody. Below (see Proposition \ref{proposemidirecto}), we observe that the same situation happens for abelian-by-cyclic groups. (In Section \ref{Sec:ejemplos}, we provide other examples of free actions that extend to handlebodies).

Let us first recall that there is exactly one topological free action of a finite cyclic group on a closed orientable surface.  Let $h:S \to S$ be an order $q$ orientation-preserving homeomorphism of a closed Riemantable surface $S$ of genus $g \geq 2$ and $H=\langle h \rangle \cong {\mathbb Z}_{q}$ acting freely. Let $P:S \to R=S/H$ be a Galois cover with deck group $H$. Let $\delta \subset R$ be 
an essential dividing simple loop $\delta \subset R$ such that one of the components $T$ of $R \setminus \{\delta\}$ is of genus one. As $\delta$ is a commutator of the fundamental group of $R$, the collection $P^{-1}(\delta)$ consists of $q$ essential simple loops, each one with a trivial $H$-stabilizer. If $\alpha$ is one of the loops in $P^{-1}(\delta)$, then  either: 
(i) it is the only boundary of some component of $S \setminus P^{-1}(\delta)$ (in particular a commutator) or 
(ii) it is one of the $q$ boundaries of the component of $S \setminus P^{-1}(\delta)$ of genus one (so a product of commutators).

\begin{prop}\label{proposemidirecto}
Let $G$ be a finite group, admitting a normal abelian subgroup $A$ such that $C=G/A$ is a cyclic group. If $G$ acts freely as a group of orientation-preserving homeomorphisms of a closed orientable surface $S$ of genus $g \geq 2$, then it extends to a handlebody.
\end{prop}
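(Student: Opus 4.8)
The plan is to use the Schottky criterion directly: by the equivariant loop theorem \cite{M-Y} together with \cite{H-M}, the group $G$ extends to a handlebody if and only if $S$ carries a $G$-invariant Schottky system of loops, so I will build one explicitly. Write $R=S/G$, let $P\colon S\to R$ be the quotient covering and $\theta\colon\pi_1(R)\to G$ the surjection defining the free action; since the action is free and $g\ge 2$, Riemann--Hurwitz gives $\gamma:=\mathrm{genus}(R)\ge 2$. Fix standard generators $a_1,b_1,\dots,a_\gamma,b_\gamma$ of $\pi_1(R)$ with $\prod_i[a_i,b_i]=1$, let $T_i$ be the one-holed torus carrying $a_i,b_i$, set $\delta_i=\partial T_i=[a_i,b_i]$, and let $Q_0=R\setminus\bigcup_i T_i$ be the complementary $\gamma$-holed sphere. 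Replacing $\theta$ by $\theta\circ\phi_*$ for a self-homeomorphism $\phi$ of $R$ yields an equivariantly homeomorphic action, hence one that extends to a handlebody exactly when the original does; so I am free to normalize the chosen generators by any element of $\mathrm{Aut}^+(\pi_1(R))$ realized by a homeomorphism.

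The heart of the argument is the composite $\bar\theta\colon\pi_1(R)\xrightarrow{\theta}G\to C=G/A$. As $C$ is abelian, $\bar\theta$ factors through $H_1(R)=\mathbb Z^{2\gamma}$ and corresponds to a surjection onto $C$, i.e.\ a unimodular vector in $(\mathbb Z/|C|)^{2\gamma}$. Since $\mathrm{Sp}(2\gamma,\mathbb Z)$ acts, through its reduction modulo $|C|$, transitively on unimodular vectors, and is realized by the mapping class group of $R$, I may normalize the generators so that $\bar\theta(a_1)$ generates $C$ while $\bar\theta$ kills every other generator. Then $\theta(b_1),\theta(a_2),\theta(b_2),\dots,\theta(a_\gamma),\theta(b_\gamma)$ all lie in $A$. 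Using the standing assumption that $A$ is abelian (the abelian-by-cyclic hypothesis) I get $[\theta(a_i),\theta(b_i)]=1$ for every $i\ge 2$; substituting into the image of the surface relation, $[\theta(a_1),\theta(b_1)]\prod_{i\ge2}[\theta(a_i),\theta(b_i)]=1$, forces $[\theta(a_1),\theta(b_1)]=1$ as well. Thus after normalization \emph{every} handle pair commutes; in particular $\theta(\delta_i)=1$ for all $i$, and $\theta$ is trivial on $\pi_1(Q_0)$ (which is generated by the $\delta_i$).

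From this commuting form I read off the $G$-invariant system $\mathcal F=P^{-1}\big(\bigcup_i\delta_i\big)\cup P^{-1}\big(\bigcup_i b_i\big)$, invariant because it is the full preimage of a fixed $1$-submanifold of $R$, and consisting of essential simple loops. Since $\theta$ is trivial on $\pi_1(Q_0)$, the preimage $P^{-1}(Q_0)$ is $|G|$ disjoint copies of the planar surface $Q_0$; since $\theta(\delta_i)=1$, each $P^{-1}(\delta_i)$ is a union of loops separating these copies from the handle-preimages $P^{-1}(T_i)$; and cutting $P^{-1}(T_i)$ further along $P^{-1}(b_i)$ reduces every component to a cyclic cover of the pair of pants $T_i\setminus b_i$ whose boundary monodromy is trivial on $\delta_i$ and generates $\langle\theta(b_i)\rangle$ on the two copies of $b_i$. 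A direct Euler-characteristic count shows each such cover has genus zero, so $S\setminus\mathcal F$ consists entirely of planar pieces and $\mathcal F$ is the desired Schottky system. The two technical points that require care are the realizability of the normalization by an orientation-preserving homeomorphism (the symplectic transitivity statement over $\mathbb Z/|C|$) and this final planarity computation for the handle covers; the conceptual crux, and what makes the proof short, is the observation that normalizing the cyclic quotient concentrates all non-commutativity into a single handle, whereupon the surface relation forces even that handle to commute.
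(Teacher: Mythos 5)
Your proof is correct, and it takes a genuinely different route from the paper's --- one that is in fact more careful at the crucial point. The paper factors the covering as $S\to X=S/A\to R$ and argues with a \emph{fixed} system of handle-bounding dividing loops $\delta^{j}\subset R$: these lift to $q$ disjoint loops in $X$ (being commutators, they die in the cyclic group $C$), the paper then asserts that each lifted loop $\delta^{j}_{i}$ is a product of commutators of $\pi_{1}(X)$, hence also dies in the abelian group $A$, and the Schottky system is the full preimage of the $\delta^{j}$ together with lifts of a non-separating loop in each handle. You dispense with the intermediate surface $X$ entirely: you normalize $\theta$ by a mapping class (via the surjection ${\rm MCG}^{+}(R)\to {\rm Sp}(2\gamma,{\mathbb Z})$ and transitivity of the reduction mod $|C|$ on unimodular vectors) so that $\bar\theta$ kills every standard generator except $a_{1}$, and then pure algebra --- abelianness of $A$ plus the surface relation --- forces \emph{every} handle commutator to die under $\theta$; after that, your system of loops and planarity count coincide with the paper's. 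The important difference is that your normalization step is not optional: the paper's assertion that the lifts $\delta^{j}_{i}$ are products of commutators of $\pi_{1}(X)$ is false for a general, unnormalized $\theta$. Indeed, for the paper's own Example 1 ($G={\mathbb Z}_{p}\rtimes{\mathbb Z}_{q}$ with $\theta(x_{1})=a$, $\theta(y_{1})=b$, $\theta(x_{2})=b^{-1}$, $\theta(y_{2})=ba^{-1}$) both handles of $R$ surject onto $C$, so each lift $\delta^{1}_{i}$ is non-separating in $X$, hence not null-homologous, hence not a product of commutators; equivalently, $\theta([x_{1},y_{1}])=a^{1-r}\neq 1$, so the preimage of $\delta^{1}=[x_{1},y_{1}]$ in $S$ consists of loops with nontrivial $G$-stabilizer, contradicting what the paper's construction needs at that stage. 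The paper implicitly relies on the uniqueness of the topological free action of a finite cyclic group (recalled just before the proposition) to put $\bar\theta$ in standard position, but never performs this reduction inside the proof; your explicit normalization is exactly the missing step, so your argument is the more complete of the two. Two small confirmations: the two technical points you flag do check out (symplectic transitivity mod $|C|$ is classical, and each degree-$n$ pants cover has genus $\frac{1}{2}\bigl(2-(n+2)+n\bigr)=0$); and you were right to read the hypothesis as ``$A$ abelian'' --- as printed the statement omits the word, but without it the proposition fails (take $A=G$), and both your argument and the paper's use it.
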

\begin{proof}
Let $\pi:S \to R=S/G$ be a Galois covering with deck group $G$ and let $g_{R} \geq 2$ be the genus of $R$.
The surface $X=S/A$ is a closed orientable surface on which the induced group of orientation preserving homeomorphisms $C=G/K \cong {\mathbb Z}_{q}$ is acting freely and $X/C=R$. Let $P:S \to X$ and $Q:X \to R$ be Galois coverings, with respective deck groups $A$ and $C$, such that $\pi=Q \circ P$.

Let $\delta^{1},\ldots,\delta^{g_{R}} \subset R$ be a collection of pairwise disjoint essential dividing simple loops, each $\delta^{j}$ cutting $R$ into a surface $T_{j}$ of genus one (and other of genus $g_{R}-1$). 

As $Q$ has cyclic deck group, the collection $Q^{-1}(\delta^{j}) \subset X$ consists of exactly $q$ pairwise disjoint essential simple loops, $\delta^{j}_{1},\ldots,\delta^{j}_{q}$, each one being represented by a product of commutator elements of the fundamental group of $X$. Moreover, the collection of loops $\cup_{j=1}^{g_{R}} Q^{-1}(\delta^{j})$ divides $X$ into genus one and genus zero surfaces.

Now, as $P$ is an abelian Galois cover, with deck group the abelian group $A$ (and as each $\delta^{j}_{i}$ is in the commutator subgroup), it can be seen that $Q^{-1}(\delta^{j}_{i})$ consists of exactly $|A|$ simple loops (each with trivial $G$-stabilizer). Moreover, 
the collection of simple lops ${\mathcal G}:=\cup_{j=1}^{g_{R}}\cup_{i=1}^{q} P^{-1}(\delta^{j}_{i})$ divides $S$ into genus one and genus zero surfaces. 

We choose a non-dividing simple loop for each surface $T_{j} \subset R$. If we adjoin the $\pi$-liftings of all of these loops to the collection ${\mathcal G}$, then we get a collection ${\mathcal F}$ of loops which is $G$-invariant and which divides $S$ into planar surfaces. This collection of loops provides a handlebody for which $G$ extends.
\end{proof}

\begin{rema}\label{smallgroups}
As all the groups of odd order at most $241$ are abelian-by-cyclic (these include the abelian groups), Proposition \ref{proposemidirecto} asserts that every free action of such groups always extends to a handlebody.
\end{rema}

%%%%%%%%%%%%%
\subsection{A special set of loops}
In the proof of Proposition \ref{proposemidirecto}, we obtained a very particular Schottky system of loops. In the next result, we observe that this is always the case for 
a finite group $G$ of orientation-preserving homeomorphisms acting freely on a closed orientable surface $S$.

\begin{prop}\label{lemita00}
Let $G$ be a finite group of orientation-preserving homeomorphisms of a closed orientable surface $S$ which acts freely and $R=S/G$ has genus $h \geq 2$. Let $h_{0}=1$ if $h=2$ and $h_{0}=h$ if $h \geq 3$. Then
\begin{enumerate}[leftmargin=15pt]
\item $G$ extends to a handlebody if there is a collection ${\mathcal A} \subset R$ consisting of $h_{0}$ pairwise disjoint essential simple dividing loops on $R$, each one dividing $R$ into a genus one surface and other of genus $h-1$, such that each loop in the collection ${\mathcal G} \subset S$, obtained by lifting ${\mathcal A}$ to $S$, has trivial $G$-stabilizer.

\item  If $G$ is either of odd order or it has a unique element of order two (for instance, a direct product ${\mathbb Z}_{2} \times \hat{G}$ where $\hat{G}$ has odd order), then the above condition is an ``if and only if" statement.
\end{enumerate}
\end{prop}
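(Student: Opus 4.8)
The plan is to handle both implications through one preliminary observation: writing $\pi\colon S\to R$ for the Galois covering with deck group $G$, a simple loop $\delta\subset R$ lifts to loops of trivial $G$-stabilizer precisely when $[\delta]\in\ker\theta$. Indeed, $\pi^{-1}(\delta)$ is a disjoint union of circles on which $G$ acts transitively, and the $G$-stabilizer of one of them has order equal to the order of the monodromy $\theta([\delta])$; hence the stabilizer is trivial if and only if $\theta([\delta])=1$, in which case $\pi^{-1}(\delta)$ consists of $|G|$ circles each mapped homeomorphically onto $\delta$. I would record this equivalence first, since it converts both the hypothesis of (1) and the conclusion of (2) into statements about $\ker\theta$.

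For the \emph{if} direction (1), I would imitate the construction in the proof of Proposition \ref{proposemidirecto}. Given ${\mathcal A}$, I would enlarge it to ${\mathcal A}'$ by adjoining, inside each genus-one piece cut off by a loop of ${\mathcal A}$ (and, when $h=2$, inside both genus-one pieces), one non-separating simple loop $\alpha_j$. Cutting $R$ along ${\mathcal A}'$ then yields a genus-zero core $P_0$ bounded by the loops of ${\mathcal A}$ (absent when $h=2$) together with one pair of pants $Q_j$ per $\alpha_j$, all planar. The point is that ${\mathcal F}=\pi^{-1}({\mathcal A}')$ is a Schottky system of loops, which I would verify piece by piece with an Euler-characteristic count: $\pi_1(P_0)$ is generated by its boundary loops, all in $\ker\theta$ by hypothesis, so $\pi^{-1}(P_0)$ is $|G|$ disjoint planar copies of $P_0$; and for each $Q_j$, whose boundary consists of a loop of ${\mathcal A}$ (in $\ker\theta$) and two copies of $\alpha_j$ with monodromy of some order $n_j$, a connected component of $\pi^{-1}(Q_j)$ has Euler characteristic $-n_j$ and $n_j+2$ boundary circles, hence genus zero. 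Since ${\mathcal F}$ is manifestly $G$-invariant and its loops are essential (each complementary piece has at least three boundary circles, so none bounds a disk), $G$ extends to a handlebody.

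For the \emph{only if} direction (2), I would pass to the Kleinian model. Because $G$ is orientation-preserving and, by hypothesis, has at most one involution, it contains no dihedral subgroup, so the handlebody extension is encoded by a virtual Schottky group $K$ with finite-index normal Schottky subgroup $\Gamma$ and $G=K/\Gamma$, to which Theorem \ref{picturevirtual} applies; orientation-preservation forces signature $(\alpha,0,\beta,0)$, so $K$ is a free product of $h=\alpha+\beta$ \emph{abelian} factors, each a cyclic loxodromic $\langle A_j\rangle$ or a group $\langle E_j,C_j\rangle\cong{\mathbb Z}_{n_j}\times{\mathbb Z}$. By the Remark following Theorem \ref{picturevirtual}, $R=\Omega/K$ is the connected sum of these $h$ tori, and the Klein-Maskit combination endows $R$ with $h$ pairwise disjoint simple loops $w_1,\dots,w_h$, each cutting off one genus-one handle from a genus-zero core. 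I would then observe that each $w_j$ is freely homotopic to the boundary commutator $[x_j,y_j]$ of its handle, and that under $\pi_1(R)\to K$ the generators $x_j,y_j$ map into the single factor attached to that handle; since every factor is abelian, $w_j$ maps to $1$ in $K$, whence $[w_j]\in\ker\theta$. Taking ${\mathcal A}=\{w_1,\dots,w_{h_0}\}$ (all $h$ loops when $h\ge 3$, a single one when $h=2$) gives the required collection.

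The step I expect to be the main obstacle is the geometric bookkeeping in (2): extracting from the combination structure of $K$ a system of disjoint loops on $R$ that simultaneously cut off genus-one handles and represent the boundary commutators of those handles, and checking that the corresponding elements of $\pi_1(R)$ really land in a single abelian factor of $K$, so that they die in $K$ and not merely in $G$. This is exactly where the hypothesis on involutions is essential: it is what licenses the clean free-product description of Theorem \ref{picturevirtual}; without it $K$ could acquire non-abelian (dihedral-type) factors whose handle commutators need not vanish, and the argument would break down.
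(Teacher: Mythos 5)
Your proposal is correct and follows essentially the same route as the paper: part (1) is the paper's construction of cutting $R$ along ${\mathcal A}$ together with one non-separating loop inside each genus-one piece and lifting (your Euler-characteristic count simply makes explicit the planarity claim the paper leaves unverified), and part (2) is the paper's passage, via the Kleinian model and Theorem \ref{picturevirtual}, to a virtual Schottky group whose Klein--Maskit combination loops project to the desired collection on $R$. The only distinction is that you spell out why the projected loops have trivial monodromy (the handle subgroups land in abelian free factors of $K$), a point the paper asserts without elaboration.
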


\begin{proof}
(1) Let $\pi_{G}:S \to R$ be a Galois covering with deck group $G$, and let ${\mathcal A}=\{\delta_{1},\ldots,\delta_{h_{0}}\} \subset R$ the collection of dividing simple loops as in the hypothesis. In this case, $R \setminus {\mathcal A}$ consists of $h$ surfaces, say  $T_{1},\ldots,T_{h}$, each one of genus one with one boundary component ($T_{j}$ having as boundary the loop $\delta_{j}$), and (for $h \geq 3$) one planar surface $T_{0}$ with $h$ boundary loops (all the loops in ${\mathcal A}$). As each loop in $\pi_{G}^{-1}(\delta_{j})$  has trivial $G$-stabilizer, it follows that each connected component $X_{j,k}$ of $\pi_{G}^{-1}(T_{j})$ (for $j=1,\ldots,h$) is of genus one (with exactly $|G|$ boundary components) and (for $h \geq 3$) each connected component $Y$ of $\pi_{G}^{-1}(T_{0})$ (for $j=1,\ldots,h$) is of genus zero and $\pi_{G}:Y \to T_{0}$ is a homeomorphism.

Let $\eta_{j} \subset X_{j}$ be a non-dividing simple loop (so $X_{j} \setminus \eta_{j}$ is a planar surface with three boundary components). Then the liftings, under $\pi_{G}$, of $\eta_{j}$ will cut off each of the surfaces $X_{j,k}$ into planar surfaces.
 By adding to ${\mathcal G}=\pi_{G}^{-1}({\mathcal A})$ all of these lifted loops will provide a collection ${\mathcal F}$ of pairwise disjoint simple loops which is $G$-invariant and such that $S \setminus {\mathcal F}$ consists of planar surfaces, so $G$ extends to a handlebody.

(2) Assume $G$ extends to a handlebody.
By the Nielsen realization theorem \cite{Kerckhoff}, we may assume that $S$ is a closed Riemann surface and that $G$ is a group of its conformal automorphisms. In this case, $R=S/G$ is a closed Riemann surface of genus two and we may think of $F$ as a Fuchsian group uniformizing $R$, that is, $R={\mathbb H}^{2}/F$. If $F_{S}$ denotes the 
 kernel of $\theta$, then $S={\mathbb H}^{2}/F_{S}$ and $G=F/F_{S}$.  The extension of $G$ to a handlebody is equivalent to the existence of a 
 Schottky group $\Gamma$ of rank $g$, with a region of discontinuity $\Omega$, and a Galois covering map $P:\Omega \to S$, with deck group $\Gamma$,
for which $G$ lifts. This means that, for every element $\phi \in G$, there is a M\"obius transformation $M_{\phi} \in {\rm PSL}_{2}({\mathbb C})$ such that $P \circ M_{\phi} = \phi \circ P$. The group obtained by the liftings of all elements of $G$ is a Kleinian group $K$ which contains $\Gamma$ as a normal subgroup and $K/\Gamma=G$.
This extension property asserts the existence of a surjective homomorphism $\rho:F \to K$ and a 
surjective homomorphism $\omega:K \to G$, whose kernel is $\Gamma$, such that $\theta=\omega \circ \rho$. By Theorem \ref{picturevirtual},
 $K$ is the free product, in the sense of the Klein-Maskit combination theorems \cite{Maskit:Comb, Maskit:Comb4},  of $\alpha \geq 0$ cyclic groups generated by loxodromic transformations, and $\beta \geq 0$ groups, each one generated by a loxodromic transformation and an elliptic transformation, both of them commuting, such that $\alpha+\beta=h$.
 In any of these situations, there are simple loops in $\Omega$ over which the free product is done. Such a collection of loop projects to $R=\Omega/K$ as a collection of simple loop ${\mathcal A}$ as desired.
\end{proof}

\begin{rema}[Proposition \ref{lemita00} in the presence of orientation-reversing elements]\label{noorientable}
Note from the above proof (and by Theorem \ref{picturevirtual}) that a similar result is still valid for the case that $G$ contains orientation-reversing elements. In this more general situation, the system of loops ${\mathcal A}$ on $S/G$ has the property that each loop cuts off $R$ into two surfaces, one of them being either a torus or a Klein bottle, and the lift of any of the loops has trivial $G$-stabilizer.
\end{rema}

In the case $h=2$, Proposition \ref{lemita00} can be stated as follows.

\begin{lemm}\label{lemita}
Let $G$ be a finite group of orientation-preserving homeomorphisms of a closed orientable surface $S$ which acts freely and $S/G$ has genus two. Then 
\begin{enumerate}[leftmargin=15pt]
\item $G$ extends to a handlebody if there is an essential simple loop $\gamma \subset S$ satisfying the following properties:
\begin{enumerate}
\item the $G$-stabilizer of $\gamma$ is trivial;
\item for every $a \in G \setminus \{1\}$, $a(\gamma) \cap \gamma = \emptyset$;
\item if ${\mathcal G}=\{a(\gamma): a \in G\}$, then each connected componet of $S \setminus {\mathcal G}$ has genus one.
\end{enumerate}
\item If $G$ is either of odd order or it has a unique element of order two (for instance, a direct product ${\mathbb Z}_{2} \times \hat{G}$ where $\hat{G}$ has odd order), then the above condition is an ``if and only if" statement.
\end{enumerate}
\end{lemm}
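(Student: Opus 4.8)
The plan is to read this Lemma as the case $h=2$ of Proposition \ref{lemita00}, where $h_{0}=1$ and the collection ${\mathcal A}$ degenerates to a single loop, and to translate the hypothesis of Proposition \ref{lemita00} (phrased for loops on the quotient $R=S/G$) into the hypothesis on a single loop $\gamma$ upstairs on $S$.

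For part (1), I would start from $\gamma$ and pass to the quotient. Let $\pi_{G}:S\to R$ be the Galois covering with deck group $G$. Conditions (a) and (b) say exactly that $\pi_{G}|_{\gamma}$ is injective: if $\pi_{G}(x)=\pi_{G}(y)$ with $x,y\in\gamma$, then $y=a(x)$ for some $a\in G$, and $a\neq 1$ would force $y\in\gamma\cap a(\gamma)=\emptyset$. Hence $\delta:=\pi_{G}(\gamma)$ is an essential simple loop on $R$ whose full preimage is exactly ${\mathcal G}=\{a(\gamma):a\in G\}$, a $G$-invariant family of $|G|$ pairwise disjoint essential simple loops, each with trivial $G$-stabilizer. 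Condition (c) then says that $S\setminus{\mathcal G}$ consists of surfaces of genus one. At this point the construction is identical to the one carried out in the proof of Proposition \ref{lemita00}(1): inside each genus-one component of $S\setminus{\mathcal G}$ one selects a non-dividing simple loop and takes its $G$-orbit; adjoining this $G$-equivariant family to ${\mathcal G}$ produces a $G$-invariant collection ${\mathcal F}$ of pairwise disjoint essential simple loops with $S\setminus{\mathcal F}$ planar, that is, a Schottky system of loops for $G$. By the equivalence recalled in the Introduction (see \cite{H-M}), $G$ then extends to a handlebody.

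For part (2), the forward implication is precisely part (1), so only the converse needs the extra hypothesis on $G$. Assuming $G$ has odd order or a unique element of order two, $G$ contains no subgroup isomorphic to $D_{n}$, ${\mathcal A}_{4}$, ${\mathcal A}_{5}$ or ${\mathcal S}_{4}$, so Theorem \ref{picturevirtual} applies and Proposition \ref{lemita00}(2) is available. If $G$ extends to a handlebody then, by Proposition \ref{lemita00}(2) with $h=2$, there is a single essential dividing loop $\delta\subset R$ cutting $R$ into two surfaces of genus one, whose lift $\pi_{G}^{-1}(\delta)$ has trivial $G$-stabilizers. Taking $\gamma$ to be any connected component of $\pi_{G}^{-1}(\delta)$, conditions (a) and (b) hold because $\pi_{G}^{-1}(\delta)=\{a(\gamma):a\in G\}$ is a family of $|G|$ pairwise disjoint loops with trivial stabilizers, and condition (c) holds because, as computed in the proof of Proposition \ref{lemita00}(1), every component of $\pi_{G}^{-1}(R\setminus\delta)$ has genus one. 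Thus such a $\gamma$ exists.

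The step I expect to be the main obstacle is the $G$-equivariant planarization inside the genus-one pieces in part (1): one must check that the non-dividing loops can be chosen so that their $G$-orbit is a family of pairwise disjoint simple loops and that cutting each genus-one component along it leaves only planar pieces, even when a component has non-trivial setwise $G$-stabilizer (as happens, for instance, when $\delta$ is non-separating and $S\setminus{\mathcal G}$ is connected). This is resolved exactly as in Proposition \ref{lemita00}(1): since each such component and its $\pi_{G}$-image both have genus one and the stabilizer acts freely, a non-dividing loop downstairs lifts to a family that opens the single handle, after which every remaining piece is planar.
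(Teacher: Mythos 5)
Your proposal is correct and follows the paper's own route: the paper offers no independent proof of this lemma, presenting it as precisely the $h=2$ case of Proposition \ref{lemita00} (with Remark \ref{divide} recording that the loop may be taken to project to a dividing loop), and your argument is exactly that specialization with the upstairs/downstairs translation between $\gamma\subset S$ and $\delta=\pi_{G}(\gamma)\subset R$ spelled out. The one place you go beyond what the paper records is the case where $\pi_{G}(\gamma)$ is non-dividing; your treatment there is sound (triviality of the $G$-stabilizers of the boundary lifts forces the deck group of each genus-one complementary piece to be an abelian quotient of ${\mathbb Z}^{2}$, so cutting along the lifts of a non-dividing loop does leave only planar pieces), and it is at the same level of detail as the paper's own proof of Proposition \ref{lemita00}(1).
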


\begin{rema}\label{divide}
As seen in the proof of part (1) of Proposition \ref{lemita00}, for $h=2$, we may assume that the loop $\gamma \subset S$ (as in Lemma \ref{lemita}) projects to $S/G$ as an essential dividing simple loop.
\end{rema}

%%%%%%%%%%%%%%%%%
\subsection{Automorphism group of the genus two fundamental group}\label{geometrico}
The fundamental group of a closed orientable surface $R$ of genus two has a presentation as follows:
$$F=\langle x_{1},x_{2},y_{1},y_{2}: [x_{1},y_{1}][x_{2},y_{2}]=1\rangle,$$
where $[a,b]=aba^{-1}b^{-1}$. Figure \ref{figura1} shows the generators of $F$ seen as simple loops in $R$.

\begin{figure}[htb]
\centering
\includegraphics[width=2.5in]{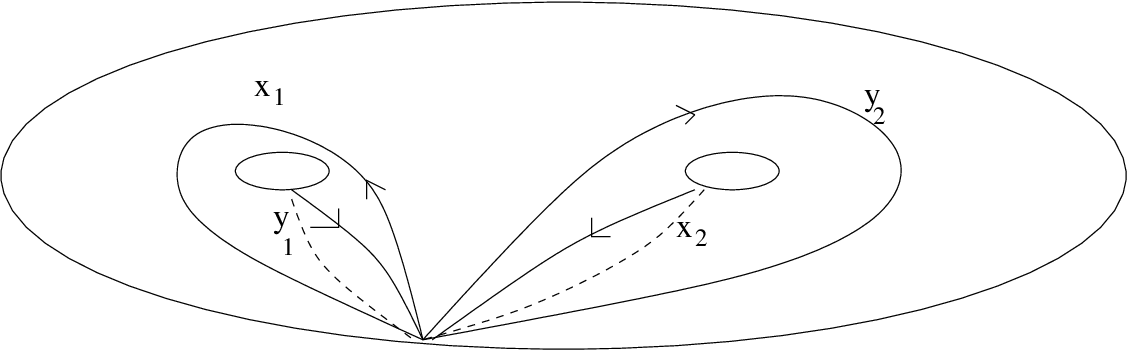}
\caption{The group $F$ seen as $\pi_{1}(R)$}
 \label{figura1}
\end{figure}

%%%%%%%%%%%%%%

By identifying $F$ with $\pi_{1}(R)$ (Figure \ref{figura1}), the Nielsen theorem \cite{Nielsen} asserts that  every automorphism of $F$ is induced by a homeomorphism of $R$. We denote by ${\rm Aut}^{+}(F)$ the subgroup consisting of those induced by orientation-preserving ones and by ${\rm Out}^{+}(F)={\rm Aut}^{+}(F)/{\rm Inn}(F)$. The group ${\rm Out}^{+}(F)$ can be identified with the mapping class group ${\rm MCG}(R)={\rm Hom}^{+}(R)/{\rm Hom}_{0}(R)$ of $R$.

Some of elements of ${\rm Aut}^{+}(R)$ are given by the automorphisms $\sigma_{j}$, which are described as Dehn-twits along simple loops $w_{j}$, where (up to isotopy) $w_{1}=y_{1}$, $w_{2}=x_{1}$, $w_{3}=x_{2}^{-1}y_{1}$, $w_{4}=y_{2}$ and $w_{5}=x_{2}$. In terms of the given generators of $F$, these automorphisms are the following ones:
\begin{equation}\label{MCG}
\begin{array}{l}
\sigma_{1}:(x_{1},y_{1},x_{2},y_{2}) \mapsto (x_{1}y_{1},y_{1},x_{2},y_{2}),\\
\sigma_{2}:(x_{1},y_{1},x_{2},y_{2}) \mapsto (x_{1},y_{1}x_{1}^{-1},x_{2},y_{2}),\\
\sigma_{3}:(x_{1},y_{1},x_{2},y_{2}) \mapsto (x_{2}^{-1}y_{1}x_{1},y_{1},x_{2},x_{2}^{-1}y_{1}y_{2}),\\
\sigma_{4}:(x_{1},y_{1},x_{2},y_{2}) \mapsto (x_{1},y_{1},x_{2}y_{2}^{-1},y_{2}),\\
\sigma_{5}:(x_{1},y_{1},x_{2},y_{2}) \mapsto (x_{1},y_{1},x_{2},y_{2}x_{2}^{-1}).
\end{array}
\end{equation}

If we set $\sigma_{5+j}=\sigma_{j}^{-1}$, then
\begin{equation}
\begin{array}{l}
\sigma_{6}:(x_{1},y_{1},x_{2},y_{2}) \mapsto (x_{1}y_{1}^{-1},y_{1},x_{2},y_{2}),\\
\sigma_{7}:(x_{1},y_{1},x_{2},y_{2}) \mapsto (x_{1},y_{1}x_{1},x_{2},y_{2}),\\
\sigma_{8}:(x_{1},y_{1},x_{2},y_{2}) \mapsto (y_{1}^{-1}x_{2}x_{1},y_{1},x_{2},y_{1}^{-1}x_{2}y_{2}),\\
\sigma_{9}:(x_{1},y_{1},x_{2},y_{2}) \mapsto (x_{1},y_{1},x_{2}y_{2},y_{2}),\\
\sigma_{10}:(x_{1},y_{1},x_{2},y_{2}) \mapsto (x_{1},y_{1},x_{2},y_{2}x_{2}).
\end{array}
\end{equation}

Let us denote by ${\rm Out}_{0}^{+}(F)$ the subgroup of ${\rm Aut}^{+}(F)$ generated by $\sigma_{1},\ldots,\sigma_{5}$.

\begin{rema}
The elements $\sigma_{1},\ldots,\sigma_{5}$ project to a set of generators of ${\rm Out}^{+}(F)$. This permitted to observe (see \cite{BM}) that ${\rm Out}^{+}(F)$ is a homomorphic image of the Artin-braid group 
$$B_{6}=\langle \sigma_{1},\ldots,\sigma_{5}:
\sigma_{i}\sigma_{j}=\sigma_{j}\sigma_{i}, \; |i-j| \geq 2, \; \sigma_{i}\sigma_{i+1}\sigma_{i}=\sigma_{i+1}\sigma_{i}\sigma_{i+1}, \; i=1,2,3,4\rangle.$$
\end{rema}

%%%%%%%%%%
\subsection{The collection ${\mathfrak C}$}\label{Sec:C}
Let us denote by ${\mathfrak C} \subset F$ the (infinite) collection of all of the images under ${\rm Out}_{0}^{+}(F)$ of the commutator $[x_{1},y_{1}]$.
The following identities:
$$
\sigma_{j}([x_{1},y_{1}])=\left\{
\begin{array}{ll}
[x_{1},y_{1}],& j \in \{1,2,4,5,6,7,9,10\},\\
(x_{2}^{-1}y_{1})[x_{1},y_{1}](x_{2}y_{1}^{-1}), & j=3,\\
(y_{1}^{-1}x_{2})[x_{1},y_{1}](y_{1}x_{2}^{-1}), & j=8,
\end{array}
\right.$$
permit to see that  ${\mathfrak C}$ consists of the elements 
$$[x_{1},y_{1}],\; (x_{2}^{-1}y_{1})[x_{1},y_{1}](x_{2}y_{1}^{-1}), \; (y_{1}^{-1}x_{2})[x_{1},y_{1}](y_{1}x_{2}^{-1}),$$
together those of the form
$$\sigma_{i_{n}}\circ \cdots \circ \sigma_{i_{1}} \circ \sigma_{l}([x_{1},y_{1}]), \; 
l \in \{3,8\}, \; n \geq 1, \; i_{j} \in \{1,\ldots,10\}.$$

\begin{rema}
Let $\theta:F \to G$ be a surjective homomorphism and set $a=\theta(x_{1})$, $b=\theta(y_{1})$, $c=\theta(x_{2})$ and $d=\theta(y_{2})$.
If $w \in {\mathfrak C} \cap \ker(\theta)$, then it produces suitable $u_{w},v_{w} \in G$ such that 
$[a,b]=[u_{w},v_{w}]$. For instance, 
(i) if $w=\sigma_{3}([x_{1},y_{1}]) \in \ker(\theta)$, then $[a,b]=[b^{-1},c]$,
(ii) if $w=\sigma_{8}([x_{1},y_{1}]) \in \ker(\theta)$, then $[a,b]=[c^{-1},b]$,
(iii) if $w=\sigma_{2}(\sigma_{3}([x_{1},y_{1}])) \in \ker(\theta)$, then $[a,b]=[ab^{-1},c]$,
(iv) if $w=\sigma_{3}(\sigma_{3}([x_{1},y_{1}])) \in \ker(\theta)$, then $[a,b]=[b^{-1},cb^{-1}c]$,
(v) if $w=\sigma_{4}(\sigma_{3}([x_{1},y_{1}])) \in \ker(\theta)$, then $[a,b]=[b^{-1},cd^{-1}]$,
(vi) if $w=\sigma_{1}(\sigma_{2}(\sigma_{3}([x_{1},y_{1}]))) \in \ker(\theta)$, then $[a,b]=[a,c]$.
\end{rema}

%%%%%%%%%%%%%%%%%
%%%%%%%%%%%%%%%%%
\section{A simple condition for extendability of free actions to handlebodies}
Let us consider a free action of a finite group $G$ on a closed orientable surface $S$ such that $R=S/G$ has genus $\gamma \geq 2$. Such an action corresponds to a 
surjective homomorphism $\theta:F^{\gamma} \to G$, where
$F^{\gamma}=\langle x_{1},y_{1},\ldots,x_{\gamma},y_{\gamma}: \prod_{j=1}^{\gamma}[x_{j},y_{j}]=1\rangle$ (which we identify with the genus $\gamma$ fundamental group of $R$).
In this section, we provide necessary and sufficient conditions for the existence of a Schottky system of loops for $G$ in terms of $\theta$. Below, we deal first with the case $\gamma=2$ (which is the case we will need later)and then the case $\gamma \geq 3$. Finally, we discuss the extendability to handlebodies in terms of a suitable set of generators for $G$ (this part will be not necessary for what follows).

%%%%%%%%%%%%
\subsection{Necessary and sufficient conditions: The case $\gamma=2$}
In this case, $F^{2}=F$.
If $a=\theta(x_{1})$, $b=\theta(y_{1})$, $c=\theta(x_{2})$ and $d=\theta(y_{2})$, then $G$ is generated by $a,b,c,d$ and it has as one of its relations the following: $[a,b][c,d]=1$. 
Let ${\mathfrak C} \subset F$ as defined in Section \ref{Sec:C}. 

\begin{theo}\label{main0}
Let $G$ be a finite group, either of odd order or with a unique element of order two, which can be generated by four elements $a,b,c,d$ such that $[a,b][c,d]=1$. 
A free action of $G$ on a surface $S$, induced by a surjective homomorphism
$\theta:F \to G$, extends to a handlebody if and only if there is some $\phi \in {\rm Out}_{0}^{+}(F)$, such that $\phi([x_{1},y_{1}]) \in \ker(\theta)$, i.e., if and only if  ${\mathfrak C} \cap \ker(\theta) \neq \emptyset$. 
\end{theo}
\begin{proof}
Let us fix a finite group $G$, realized as a group of orientation-preserving homeomorphisms of an orientable surface $S$, acting freely and with quotient $R=S/G$ of genus two (so $S$ has genus $g=g_{G}=1+|G|$). The free action is defined by a surjective homomorphism $\theta:F \to G$ (unique up to post-composition with an automorphism of $G$ and pre-composition by an automorphism of $F$). 
By Lemma \ref{lemita} (and Remark \ref{divide}), the free action of $G$ extends to a handlebody if and only if there is an essential dividing simple loop $\delta \subset R$
 that lifts to exactly $|G|$ simple loops on $S$.
For any two essential simple diving loops on $R$, say $\delta_{1}, \delta_{2}$, there is an orientation-preserving homeomorphism of $R$ carrying $\delta_{1}$ to $\delta_{2}$. By Section \ref{geometrico}, this provides the desired result for ${\rm Aut}^{+}(F)$ instead of ${\rm Out}_{0}^{+}(F)$. But, as inner automorphisms keep invariant 
the kernel of $\theta$, we may replace ${\rm Aut}^{+}(F)$ by ${\rm Out}_{0}^{+}(F)$. The last part follows from Section \ref{geometrico}.
\end{proof}

\begin{rema}
As a consequence of Theorem \ref{main0}, if $N$ is a normal subgroup of $F$, of a finite odd index, such that $N \cap {\mathfrak C} = \emptyset$, then the canonical projection  $\theta:F \to G=F/N$ provides a free action of $G$ that does not extend to a handlebody.
Given an element $w \in {\mathfrak C}$, one may use GAP \cite{GAP} to check if it belongs to ${\mathfrak C} \cap N$ (but ${\mathfrak C}$ is infinite). 
Fortunately, in $F$ there are only a finite number of subgroups of a fixed index $d=|G|$. This means that the ${\rm Aut}^{+}(F)$-orbit of $N$ is finite, that is, there is a suitable finite subcollection ${\mathfrak C}_{G}$ of ${\mathfrak C}$ such that the above free action of $G$ does not extend to a handlebody if and only if ${\mathfrak C}_{G} \cap N =\emptyset$.
\end{rema}

\begin{coro}\label{estrategia}
Let $\theta:F \to G$ be a surjective homomorphism, where $G$ is a finite group of odd order, such that $[x_{1},y_{1}] \notin \ker(\theta)$. Let $N$ be the (finite) intersection of all the ${\rm Aut}^{+}(F)$-images of $\ker(\theta)$. Then the free action of the group $G_{N}=F/N$ (of odd order) induced by the natural projection $\theta_{N}:F \to F/N$ cannot extend to a handlebody. If moreover, $B_{0}(G_{N})=0$, then this is an extendable free action that does not extend to a handlebody.
\end{coro}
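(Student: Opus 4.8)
The plan is to reduce the whole statement to the handlebody criterion of Theorem \ref{main0}(1), according to which the free action of $G_N$ induced by $\theta_N$ extends to a handlebody if and only if ${\mathfrak C} \cap \ker(\theta_N) = {\mathfrak C} \cap N \neq \emptyset$. Thus the crux is to establish $N \cap {\mathfrak C} = \emptyset$, and everything else is bookkeeping.

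First I would record the structural properties of $N$. Since $\ker(\theta)$ has finite index $|G|$ and a finitely generated group has only finitely many subgroups of a given finite index, the orbit $\{\phi(\ker\theta): \phi \in {\rm Aut}^+(F)\}$ is finite; hence $N$, being the intersection of finitely many finite-index subgroups, has finite index in $F$. It is moreover ${\rm Aut}^+(F)$-invariant: for any $\psi \in {\rm Aut}^+(F)$ one has $\psi(N) = \bigcap_\phi (\psi\phi)(\ker\theta) = N$, since $\phi \mapsto \psi\phi$ permutes ${\rm Aut}^+(F)$. Finally, $F/N$ embeds into the finite product $\prod_\phi F/\phi(\ker\theta)$, each factor being isomorphic to $G$; so $|G_N| = [F:N]$ divides a power of $|G|$ and is therefore odd. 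This confirms the parenthetical claim that $G_N$ has odd order and lets me invoke Theorem \ref{main0} for $\theta_N$.

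The key step is then $N \cap {\mathfrak C} = \emptyset$. Suppose, for contradiction, that $w \in N \cap {\mathfrak C}$. By the description of ${\mathfrak C}$ in Section \ref{Sec:C}, $w = \phi([x_1,y_1])$ for some $\phi \in {\rm Out}_0^+(F) \subseteq {\rm Aut}^+(F)$. Applying $\phi^{-1}$ and using the invariance of $N$ gives $[x_1,y_1] = \phi^{-1}(w) \in \phi^{-1}(N) = N$. Taking $\phi = \mathrm{id}$ in the defining intersection shows $N \subseteq \ker(\theta)$, whence $[x_1,y_1] \in \ker(\theta)$, contradicting the standing hypothesis. Therefore $N \cap {\mathfrak C} = \emptyset$, and Theorem \ref{main0}(1) gives that the free action of $G_N$ induced by $\theta_N$ does not extend to a handlebody.

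For the final assertion, assuming $B_0(G_N) = 0$, part (1) of Samperton's Theorem \ref{teo1} shows that every free action of $G_N$ on a closed orientable surface extends (non-singularly), in particular the one induced by $\theta_N$; combined with the previous paragraph this produces the desired extendable free action that does not extend to a handlebody. The only delicate point is the invariance identity $\phi^{-1}(N) = N$ together with the embedding argument for oddness of $|G_N|$; once these are secured the contradiction is immediate, so I expect no serious obstacle beyond this.
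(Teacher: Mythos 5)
Your proof is correct and follows essentially the same route as the paper: the key step $N \cap {\mathfrak C} = \emptyset$ is exactly the paper's Lemma \ref{lemaintersecta}, the oddness of $|G_{N}|$ comes from the same embedding $G_{N} \hookrightarrow G^{r}$, and the conclusion is drawn from Theorem \ref{main0} just as in the paper. Your inline argument for $N \cap {\mathfrak C} = \emptyset$ (using the full ${\rm Aut}^{+}(F)$-invariance of $N$ directly, rather than the paper's decomposition of the automorphism into a coset representative $\phi_{k}$ and a stabilizer of $\ker(\theta)$) is a mild streamlining of Lemma \ref{lemaintersecta}, and your explicit appeal to Theorem \ref{teo1}(1) for the final assertion just makes precise a step the paper leaves implicit.
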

\begin{proof}
Let us denote by $N_{1}=\ker(\theta), \ldots, N_{r}$ the ${\rm Aut}^{+}(F)$-orbit of $\ker(\theta)$ and let $N:=N_{1} \cap \cdots \cap N_{r}$. We claim that 
$N \cap {\mathfrak C} = \emptyset$. In fact,
let us assume, by contradiction, there is some $w \in  N \cap {\mathfrak C}$. Let $\phi_{j} \in {\rm Out}_{0}^{+}(F)$ be such that $N_{j}=\phi_{j}(N_{1})$. Then there is some $k \in \{1,\ldots,r\}$ and there is some $\phi \in {\rm Aut}^{+}(F)$ with $\phi(N_{1})=N_{1}$ such that $w=\phi_{k}(\phi([x_{1},y_{1}]))$. So, as $w \in N < N_{k}$, it holds that $\phi_{k}(\phi([x_{1},y_{1}])) \in N_{k}$, that is, $[x_{1},y_{1}] \in \phi^{-1}(\phi_{k}^{-1}(N_{k}))=N_{1}=\ker(\theta)$, a contradiction. 
Now, we may consider the canonical surjective homomorphism $\theta_{N}:F \to G_{N}=F/N$. We note that $G_{N}$ has odd order ($G_{N}$ is a subgroup of $G^{r}$). As a consequence of Theorem \ref{main0}, the corresponding free action of $G_{N}$ cannot extend to a handlebody. The last part follows from Samperton´s theorem.
\end{proof}

\begin{rema}
Corollary \ref{estrategia} enables us to create an algorithm for constructing free actions that don't extend to a handlebody (see Section \ref{Sec:main} for details).
\end{rema}

%%%%%%%%%%%%%%%
\subsection{Necessary and sufficient conditions: The case $\gamma \geq 3$}
Let ${\mathfrak C}^{\gamma}$ be the ${\rm Aut}^{+}(F^{\gamma})$-orbit of the set $\{[x_{1},y_{1}],\ldots,[x_{\gamma},y_{\gamma}]\}$, where ${\rm Aut}^{+}(F^{\gamma})$ denotes the group of automorphisms of $F^{\gamma}$ induced by orientation-preserving homeomorphisms of $R$.

\begin{theo}\label{main00}
Let $G$ be a finite group, either of odd order or with a unique element of order two, which can be generated by $2\gamma$ elements $a_{1},b_{1}\ldots,a_{\gamma},b_{\gamma}$ such that $[a_{1},b_{1}] \cdots [a_{\gamma},b_{\gamma}]=1$, where $\gamma \geq 3$. 
A free action of $G$ on a surface $S$, induced by a surjective homomorphism
$\theta:F^{\gamma} \to G$, extends to a handlebody if and only if there is some $\phi \in {\rm Aut}^{+}(F^{\gamma})$ such that ${\mathfrak C} \cap \ker(\theta) \neq \emptyset$. 
\end{theo}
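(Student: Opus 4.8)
The plan is to follow the pattern of the proof of Theorem \ref{main0}(1), but with the single separating loop there replaced by a maximal system of $\gamma$ separating loops, and with the explicit generators of $\mathrm{Out}^{+}(F)$ replaced by the full mapping class group, i.e.\ by $\mathrm{Aut}^{+}(F^{\gamma})$. First I would invoke Proposition \ref{lemita00} with $h=\gamma\ge 3$, so that $h_{0}=h=\gamma$. Since $G$ is assumed to have either odd order or a unique element of order two, part (2) of that proposition gives the full equivalence: $G$ extends to a handlebody if and only if there is a collection $\mathcal{A}=\{\delta_{1},\dots,\delta_{\gamma}\}\subset R$ of pairwise disjoint essential simple loops, each dividing $R$ into a genus-one surface and a genus-$(\gamma-1)$ surface (so that together they cut $R$ into $\gamma$ one-holed tori and one planar piece with $\gamma$ boundary components), whose total lift $\pi_{G}^{-1}(\mathcal{A})$ has every component with trivial $G$-stabilizer. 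This reduces the statement to a purely combinatorial description of the admissible collections $\mathcal{A}$.

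Next I would translate the stabilizer condition into membership in $\ker(\theta)$. Writing $w_{j}\in F^{\gamma}=\pi_{1}(R)$ for the class of $\delta_{j}$, the restriction $\pi_{G}\colon \pi_{G}^{-1}(\delta_{j})\to\delta_{j}$ is the $G$-cover of a circle classified by $\theta(w_{j})\in G$; its components are circles, each covering $\delta_{j}$ with degree equal to the order of $\theta(w_{j})$ and with $G$-stabilizer a cyclic group generated by a conjugate of $\theta(w_{j})$. Hence each component has trivial stabilizer (equivalently $\pi_{G}^{-1}(\delta_{j})$ has exactly $|G|$ components) if and only if $\theta(w_{j})=1$, that is $w_{j}\in\ker(\theta)$. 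Because $\ker(\theta)$ is normal in $F^{\gamma}$, this condition depends only on the free homotopy (conjugacy) class of $\delta_{j}$, so no choice of basepoint or of representative within the conjugacy class intervenes. Thus $\mathcal{A}$ has the required lifting property exactly when $\{w_{1},\dots,w_{\gamma}\}\subset\ker(\theta)$.

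The remaining point is to identify the isotopy classes of admissible collections $\mathcal{A}$ with the orbit $\mathfrak{C}^{\gamma}$. A system of $\gamma$ pairwise disjoint essential separating simple loops of the above complementary type is a standard cut system, the model one being $\{[x_{1},y_{1}],\dots,[x_{\gamma},y_{\gamma}]\}$ of Figure \ref{figura1}. By the change-of-coordinates principle for multicurves, any two such systems are carried one onto the other by an orientation-preserving homeomorphism of $R$, and by Nielsen's theorem \cite{Nielsen} the induced self-map of $F^{\gamma}$ is some $\phi\in\mathrm{Aut}^{+}(F^{\gamma})$. Consequently the conjugacy classes of the loops in any admissible $\mathcal{A}$ constitute exactly a member $\{\phi([x_{1},y_{1}]),\dots,\phi([x_{\gamma},y_{\gamma}])\}$ of $\mathfrak{C}^{\gamma}$, and conversely each member of $\mathfrak{C}^{\gamma}$ is realized by such an $\mathcal{A}$. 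Combining the three steps yields that $G$ extends to a handlebody if and only if some $\phi\in\mathrm{Aut}^{+}(F^{\gamma})$ satisfies $\phi([x_{j},y_{j}])\in\ker(\theta)$ for all $j$, i.e.\ some element of $\mathfrak{C}^{\gamma}$ is contained in $\ker(\theta)$, which is the asserted equivalence.

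I expect the main obstacle to be the change-of-coordinates step. One must verify that every collection $\mathcal{A}$ produced by Proposition \ref{lemita00} has precisely the same topological type of complementary decomposition as the model system, so that transitivity of $\mathrm{MCG}(R)$ genuinely applies; and one must be careful that the loops are separating and unoriented, so the correct invariant is the $\mathrm{Aut}^{+}(F^{\gamma})$-orbit of the unordered set of conjugacy classes, rather than of an ordered tuple of specific group elements. The normality of $\ker(\theta)$ is exactly what makes this conjugacy-class ambiguity harmless.
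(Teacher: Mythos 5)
Your proposal is correct and follows exactly the route the paper intends: the paper's own (one-line) proof of Theorem \ref{main00} simply says it follows by the arguments of Theorem \ref{main0} applied via Proposition \ref{lemita00}, which is precisely what you carry out — reduction to the loop system of Proposition \ref{lemita00}(2), translation of trivial $G$-stabilizers into $\ker(\theta)$-membership (with normality of $\ker(\theta)$ absorbing the conjugacy ambiguity), and transitivity of the mapping class group on such standard separating systems together with Nielsen's theorem to produce $\phi\in{\rm Aut}^{+}(F^{\gamma})$. Your closing caveat about verifying the complementary decomposition type (and working with unordered sets of conjugacy classes) is a real point the paper leaves implicit, and your treatment of it is sound.
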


The proof of the above result follows from similar arguments as for the proof of Theorem \ref{main0} (by using Proposition \ref{lemita00}). It is also possible to write down an equivalent result for the case that $G$ admits orientation-reversing elements (see Remark \ref{noorientable})

%%%%%%%%%%%%%%%
\subsection{Some extra remarks}
This section is optional and can be skipped without consequence, as it is not necessary for the rest.

Let $G$ be a finite group (either of odd order or with a unique element of order two) admitting a set of generators $a,b,c,d$ satisfying the relation $[a,b][c,d]=1$. Let $\theta:F \to G$ be a surjective homomorphism. As a consequence of Theorem \ref{main0}, if the induced free action of $G$ by $\theta$ extends to a handlebody, then ${\mathfrak C} \cap \ker(\theta) \neq \emptyset$, from which we obtain the existence of a set of generators $s_{1}, s_{2}, s_{3}, s_{4}$ such that $[s_{1},s_{2}]=1=[s_{3},s_{4}]$. The converse of this fact is provided by the following.

\begin{prop}\label{coromain0}
Let $G$ be a finite group, either of odd order or with a unique element of order two.
Then there exists a free action of $G$ on some surface $S$ such that $S/G$ has genus two and which extends to a handlebody if and only if 
$G$ can be generated by four elements $s_{1}, s_{2}, s_{3}, s_{4} \in G$ such that $[s_{1},s_{2}]=1=[s_{3},s_{4}]$.
\end{prop}

\begin{proof}
The ``only if" part is a consequence of Theorem \ref{main0} as previously stated.
Conversely, assume $G$ admits a set of generators, $s_{1},s_{2},s_{3},s_{4}$, such that $[s_{1},s_{2}]=1=[s_{3},s_{4}]$, then we may consider a Kleinian group $K$ 
\begin{equation}\label{eqK}
K=\langle A_{1}, B_{1}: B_{1}^{n_{1}}=[A_{1},B_{1}]=1 \rangle * \langle A_{2}, B_{2}: B_{1}^{n_{1}}=[A_{1},B_{1}]=1 \rangle \cong ({\mathbb Z} \times {\mathbb Z}_{n_{1}}) * ({\mathbb Z} \times {\mathbb Z}_{n_{2}}),
\end{equation}
where $n_{1}$ is the order of $s_{2}$ and $n_{2}$ is the order of $s_{4}$. In this case, we may consider the surjective homomorphism $\widehat{\omega}:K \to G$ defined by $\widehat{\omega}(A_{1})=s_{1}$, $\widehat{\omega}(B_{1})=s_{2}$, $\widehat{\omega}(A_{2})=s_{3}$ and $\widehat{\omega}(B_{2})=s_{4}$. The kernel of $\widehat{\omega}$ is a Schottky group $\widehat{\Gamma}$ of rank $g$ such that $\widehat{S}=\Omega/\widehat{\Gamma}$ admits the group $G$ as a group of automorphisms acting freely and $\widehat{S}/G$ of genus two (but it might be that this topological action of $G$ in this surface is not topologically conjugated to the one we started on $S$).
\end{proof}

\begin{rema}
The referee, in his report, provided me with another argument for the ``only if" part which is as follows. Suppose the free action of $G$ extends to a handlebody $M$ and that $G$ is either an odd order or has a unique element of order $2$.  (In other words, suppose $G$ has no dihedral subgroups and, hence, no Platonic subgroups.)  The quotient space $H=M/G$ is also a handlebod, and the action is equivalent to the data of a branched cover of $H$.  Since $G$ has no Platonic or dihedral subgroups, the branch locus is non-singular, i.e. it is an embedded link $L$ inside of $H$.  Given any separating disk $D$ of $H$ in general position with respect to $L$, we have an even number of intersection points of $D$ and each component of $L$.  We can then surgery the branched cover in a regular neighborhood $N(D) = D \times (-1,1)$ of $D$ to ``re-wire'' the intersection points in pairs in such a way that the new branch locus does not intersect $D = D \times \{0\}$.  This permits us to observe that there is a new handlebody $M'$ (whose boundary is $S$) for which the action of $G$ extends and from which one can obtain the desired result. 
\end{rema}

\begin{rema}
Below, we provide some direct consequences of Proposition \ref{coromain0}.
\begin{enumerate}[leftmargin=15pt]

\item Proposition\ref{coromain0} can be restated as follows. The free action of $G$ on $S$ extends to a handlebody if and only if 
there is a set of generators $\{S_{1}, S_{2}, S_{3}, S_{4} \}$ of $F$ such that: (i) each $S_{1}, S_{2}, S_{3}, S_{4}, [S_{1},S_{2}]$ represents an essential simple loop,  (ii) $[S_{1},S_{2}][S_{3},S_{4}]=1$, and (iii) $[S_{1},S_{2}]  \in \ker(\theta)$. A known algorithm to detect elements of $F$ representing simple loops is provided in \cite{BS} but it seems to be hard to check.

\item As a consequence of Proposition \ref{coromain0}, we may observe the following. Let $G$ a finite group of odd order which: (i) it can be generated by elements $a,b,c,d$ such that $[a,b][c,d]=1$, and (ii) it does not have a set of generators $s_{1},s_{2},s_{3},s_{4}$ such that  $[s_{1},s_{2}]=1=[s_{3},s_{4}]$.
Then every free action of $G$ on a surface of genus $g_{G}=1+|G|$ never extends to a handlebody. In particular, if moreover $B_{0}(G) =0$, then every free action of $G$ in genus $g_{G}$ is extendable, but not to a handlebody.

\item Two groups of orientation-preserving homeomorphisms of a surface $S$ are topologically equivalent if an orientation-preserving homeomorphism of $S$ conjugates one into the other. We say that $G$ is topologically rigid in genus $1+|G|$ if any two free actions of $G$ in that genus are topologically equivalent. In this setting, the above provides the following. Let $G$ be a finite group of odd order. 
If the topological free action of $G$ in genus $1+|G|$ is rigid, then the non-extendability of $G$ to a handlebody is equivalent to the non-existence of a set of generators $s_{1},s_{2},s_{3},s_{4}$ of $G$ with $[s_{1},s_{2}]=1=[s_{3},s_{4}]$.

\end{enumerate}
\end{rema}

%%%%%%%%%%%%%%%%%%%
%%%%%%%%%%%%%%%%%%
\section{Finite groups acting freely which extend but not to handlebodies}\label{Sec:main}
In this section, we first provide a theoretical algorithm that permits the construction of free actions that extend but not to handlebodies. Then, we use it to obtain a negative answer to our original question.

%%%%%%%%%%%%%%%%%%%
\subsection{An algorithm to construct free actions that extend but not to handlebodies}
Corollary \ref{estrategia} permits us to obtain the following algorithm to create free actions that extend but not to a handlebody.

\begin{theo}\label{algoritmo}
Let $G$ be a finite non-abelian group of odd order such that all of its Sylow subgroups are abelian, let $\theta:F=\langle x_{1},y_{1},x_{2},y_{2}: [x_{1},y_{1}][x_{2},y_{2}]=1\rangle \to G$ be a surjective homomorphism such that $[x_{1},y_{1}] \notin \ker(\theta)$, and let $N$ be the intersection of the (finite) collection of all the ${\rm Aut}^{+}(F)$-images of $\ker(\theta)$. 
If $G_{N}=F/N$ and $S_{N}$ is the surface defined by the normal subgroup $N$, then the free action of $G_{N}$ on $S_{N}$ extends but not to handlebody.
\end{theo}

\begin{rema}
The lowest order of a group $G$ satisfying the hypothesis of Theorem \ref{algoritmo} is $21$.
\end{rema}

Note that Theorem \ref{algoritmo} provides examples of free actions that extend but not to a handlebody once we have a free action of a non-abelian group of odd order whose Sylow subgroups are all abelian. The existence of such actions is given below.

\begin{coro}\label{main2}
There are finite groups of orientation-preserving homeomorphisms of a closed orientable surface that act freely and such that they extend but not to a handlebody.
\end{coro}
\begin{proof}
The semi-direct product 
$$G=\langle a,b: a^{7}=1=b^{3}, \; bab^{-1}=a^{2} \rangle \cong {\mathbb Z}_{7} \rtimes {\mathbb Z}_{3},$$
is a group of odd order ($|G|=21$) whose Sylow subgroups are cyclic groups of order $3$ and $7$.
Let us consider the surjective homomorphism 
$\theta:F \to G$, defined by 
$$\theta(x_{1})=a,\; \theta(y_{1})=b,\; \theta(x_{2})=c=b^{-1}, \; \theta(y_{2})=d=ba^{-1}.$$

We observe that $[x_{1},y_{1}] \notin \ker(\theta)$. We may now apply our algorithm (Theorem \ref{algoritmo}) to obtain a free action of a group $G_{N}$ that extends but not to a handlebody. 
\end{proof}

\begin{rema}
Unfortunately, for the group $G\cong {\mathbb Z}_{7} \rtimes {\mathbb Z}_{3}$ and $\theta:F \to G$ (as in the above proof), we have not been able to compute the corresponding group $G_{N}$ that is produced by our algorithm.

\end{rema}

%%%%%%%%%%%%%%%%%%
\subsection{Proof of Theorem \ref{algoritmo}}
As a consequence of Corollary \ref{estrategia}, we already know that $N \cap {\mathfrak C}=\emptyset$, that is, 
the free action of $G_{N}$, as a group of orientation-preserving homeomorphisms of $S_{N}$, does not extend to a handlebody. To finish the proof, we only need to check that $B_{0}(G_{N})=0$. For it, 
we proceed to describe $S_{N}$ and $G_{N}$ in terms of a suitable fiber product.

The normal subgroup $N_{1}=\ker(\theta)$ is a normal subgroup of $F$ of finite index $|G|=d \geq 3$ odd.
 As $F$ has a finite number of subgroups of index $d$, the ${\rm Aut}^{+}(F)$-orbit of $N_{1}$ consists of a finite collection $N_{1}, \ldots, N_{r}$ of normal subgroups of $F$. Let us consider the intersection $N=N_{1} \cap \cdots \cap N_{r}$; a finite index normal subgroup which is invariant under ${\rm Aut}^{+}(F)$. For each $j \in \{1,\ldots,r\}$, the natural projection $\theta_{j}:F \to G_{j}=F/N_{j}$ induces a free action of $G \cong G_{j}$ on a closed orientable surface $S_{j}$ such that $S_{j}/G_{j}=R$
(as before, $R$ represents a fixed closed orientable surface of genus two). In this case, $\theta_{1}=\theta$ and $G_{1}=G$.

Let $\pi_{j}:S_{j} \to R$ be a Galois covering with deck group $G_{j}$ associated with the free action induced by $\theta_{j}$. The fiber product of these $r$ pairs $(S_{1},\pi_{1}), \ldots, (S_{r},\pi_{r})$,
$$\hat{S}=\{(s_{1},\ldots,s_{r}) \in S_{1} \times \cdots \times S_{r}: \pi_{1}(s_{1})=\cdots=\pi_{r}(s_{r})\} \subset S_{1} \times \cdots \times S_{r},$$
is a (possibly non-connected) closed orientable surface. This surface admits the group $G^{r} = G_{1} \times \cdots \times G_{r}$ as a group of orientation-preserving homeomorphisms, acting freely and with $\hat{S}/G^{r}=R$. Moreover, the map $\pi:\hat{S} \to R$ defined by $\pi(s_{1},\ldots,s_{r})=\pi_{1}(s_{1})(=\pi_{j}(s_{j}))$ is a Galois covering with deck group $G^{r}$. The natural projections $P_{j}:\hat{S} \to S_{j}$, defined by $P_{j}(s_{1},\ldots,s_{r})=s_{j}$ are Galois coverings, with deck group $G^{r-1}$ (obtained from $G^{r}$ by eliminating the factor $G_{j}$), satisfying $\pi=\pi_{j} \circ P_{j}$.

The surface $\hat{S}$ might not be connected, but any two connected components of $\hat{S}$ are necessarily homeomorphic, and each of these connected components is defined by the group $N$
\cite{HRV}. In this way, we may assume (up to biholomorphisms) that $S_{N}$ is one of these connected components. Also, if $H$ denotes the 
$G^{r}$-stabilizer of $S_{N}$ (note that, $H$ has odd order as $G$ has odd order), then $H$ is a group of orientation-preserving homeomorphisms of $S_{N}$ which acts freely and 
such that $S_{N}/H=R$. So, we may identify $G_{N}$ with $H$.

Now, as a $p$-Sylow subgroup of $G^{r}$ is a direct product of $r$ copies of $p$-Sylow subgroups of $G$ (which are abelian by our hypothesis), the $p$-subgroups of the subgroup $G_{N}$ are also abelian. It follows that $B_{0}(G_{N})=0$ as desired.

%%%%%%%%%%%%%%%%%%
%%%%%%%%%%%%%%%%%%%
\section{Examples of handlebody extensions of free actions}\label{Sec:ejemplos}
In Theorem \ref{main0} (see also Theorem \ref{main00}), we have provided a necessary and sufficient condition for the extendability to a handlebody of 
a given free action of an odd order group $G$ acting freely on a closed orientable surface $S$ of genus two such that $R=S/G$ has genus two. More precisely, if the free action is induced by 
a surjective homomorphism $\theta:F \to G$, where $F=\langle x_{1},x_{2},y_{1},y_{2}: [x_{1},y_{1}][x_{2},y_{2}]=1\rangle$, then it extends to a handlebody if and only if
${\mathfrak C} \cap \ker(\theta) \neq \emptyset$. One can try to use GAP to determine if the previous intersection is not empty. However, as the collection ${\mathfrak C}$ is infinite, computations are challenging. We know that there is a suitable finite subcollection ${\mathfrak C}_{G}$, depending on $G$, such that ${\mathfrak C} \cap \ker(\theta) \neq \emptyset$ if and only if ${\mathfrak C}_{G} \cap \ker(\theta) \neq \emptyset$. The problem is that we do not know how to detect the correct finite subset ${\mathfrak C}_{G}$ to perform our checking.

In this section, we will provide some examples and, for each one, we check if the above intersection is non-empty, that is, if they extend to handlebodies. For our computations, we have 
considered the finite subcollection 
${\mathfrak C}_{0}$ (of cardinality $13.446$) consisting of those elements of ${\mathfrak C}$ of the form 
$$[x_{1},y_{1}],\; (x_{2}^{-1}y_{1})[x_{1},y_{1}](x_{2}y_{1}^{-1}), $$
together with those of the form
$$\sigma_{i_{n}}\circ \cdots \circ \sigma_{i_{1}} \circ \sigma_{3}([x_{1},y_{1}]), \; 
n \in \{1,2,3,4,5,6,7,8,9\}, \; i_{j} \in \{1,\ldots,5\},$$
and used GAP to compute ${\mathfrak C}_{0} \cap \ker(\theta)$.

%%%%%%%%%%
\subsection{Example 1}\label{Sec:Ejemplo1}
Our first collection of examples is given by free actions by certain abelian-by-cyclic groups. As we know, from Theorem \ref{proposemidirecto}, each of these actions extends to a handlebody. 

Let (i) $3 \leq q < p$ be prime integers,  (ii) $2 \leq r<p$, and (iii) $r^{q} \equiv 1 \mod(p)$ and the semi-direct product  
$$G=\langle a,b: a^{p}=1=b^{q}, \; bab^{-1}=a^{r} \rangle \cong {\mathbb Z}_{p} \rtimes {\mathbb Z}_{q},$$

As $p$ and $q$ are relatively prime, then $B_{0}(G)=0$ (see \cite{Kang} and Section \ref{semidirecto}).
Note that 
$$[a,b]=a^{1-r}, \; [b^{-1},ba^{-1}]=a^{r-1}.$$

Let us consider the surjective homomorphism 
$\theta:F \to G$, defined by 
$$\theta(x_{1})=a,\; \theta(y_{1})=b,\; \theta(x_{2})=c=b^{-1}, \; \theta(y_{2})=d=ba^{-1}.$$

We observe that $[x_{1},y_{1}] \notin \ker(\theta)$.
As the free action induced by $\theta$ extends to a handlebody, we know (by Theorem \ref{main0}) that ${\mathfrak C} \cap \ker(\theta) \neq \emptyset$. We proceed to check ${\mathfrak C}_{0} \cap \ker(\theta)$ in many of these cases.

First, let $w_{0}:=\sigma_{5}(\sigma_{4}(\sigma_{3}([x_{1},y_{1}])=y_{2}x_{2}^{-2}y_{1} [x_{1},y_{1}] x_{2}^{2}y_{2}^{-1}y_{1}^{-1}\in {\mathfrak C}$. We observe that $w_{0}$  belongs to  $\ker(\theta)$ if and only if $r^{3} \equiv 1 \mod(p)$. As $r^{q} \equiv 1 \mod(p)$ and $r \in \{2,\ldots,p-1\}$, we obtain that this is equivalent to have $q=3$ (as $q$ is a prime integer) and $r^{3} \equiv 1 \mod(p)$. This asserts that, for the tuples 
of the form $[p,3,r]$, one has that $w_{0} \in \ker(\theta)$. 

If $5 \leq q <p \leq 23$, our computations permited to obtain that ${\mathfrak C}_{0} \cap \ker(\theta) \neq \emptyset$ (see table \ref{tabla1}).

\begin{center}
\small{
\begin{figure}\label{tabla1}
\begin{tabular}{|c|c|}
\hline 
[p,q,r] & $w \in {\mathfrak C}_{0} \cap \ker(\theta)$    \\\hline 
[ 11, 5, 3 ] & $(y_{2}^{2}x_{2}^{-1}y_{1})^{2}x_{1}y_{1}x_{1}^{-1}(y_{1}^{-1}x_{2}y_{2}^{-2})^2y_{1}^{-1}$     \\\hline    
[ 11, 5, 4 ] & $x_{2}^{-1}y_{1}y_{2}x_{2}^{-1}(x_{2}^{-1}y_{1})^2x_{1}y_{1}x_{1}^{-1}(y_{1}^{-1}x_{2})^2x_{2}y_{2}^{-1}y_{1}^{-1}x_{2}y_{1}^{-1}$   \\\hline    
[ 11, 5, 5 ] & $x_{2}^{-1}x_{1}^{-2}y_{1}^{-1}x_{2}x_{1}y_{1}x_{1}$                                  \\\hline    
[ 11, 5, 9 ] & $x_{2}^{-1}(y_{1}x_{1}^{-1})^2y_{1}^{-1}x_{2}x_{1}^{2}y_{1}^{-1}$               \\\hline  
[ 23, 11, 2 ] &  $(y_{2}x_{2}^{-1}y_{1})^2x_{1}y_{1}x_{1}^{-1}(y_{1}^{-1}x_{2}y_{2}^{-1})^2y_{1}^{-1}$            \\\hline   
[ 23, 11, 3 ] & $y_{2}^{2}x_{2}^{-1}y_{1}x_{1}y_{1}x_{1}^{-1}y_{1}^{-1}x_{2}y_{2}^{-2}y_{1}^{-1}$                   \\\hline   
[ 23, 11,4 ] & $x_{2}^{-1}y_{1}y_{2}x_{2}^{-1}y_{1}^{2}x_{1}^{-1}(y_{1}^{-1}x_{2})^{2}y_{2}^{-1}x_{1}y_{1}^{-1}$       \\\hline   
[ 23, 11, 6 ] & $x_{2}^{-1}y_{1}x_{1}^{-1}x_{2}^{-1}y_{1}^{2}x_{1}^{-1}(y_{1}^{-1}x_{2}x_{1})^{2}y_{1}^{-1}$ \\\hline 
[ 23, 11, 8 ] & $x_{2}^{-1}y_{1}y_{2}x_{2}^{-1}y_{1}x_{1}^{-1}y_{2}x_{2}^{-1}y_{1}^{2}x_{1}^{-1}y_{1}^{-1}x_{2}(y_{1}^{-1}x_{2}y_{2}^{-1}x_{1})^{2}y_{1}^{-1}$ \\\hline 
[ 23, 11, 9 ] & $y_{2}x_{2}^{-1}y_{1}x_{1}^{-1}y_{2}x_{2}^{-1}y_{1}^{2}x_{1}^{-1}(y_{1}^{-1}x_{2}y_{2}^{-1}x_{1})^{2}y_{1}^{-1}$ \\\hline 
[ 23, 11, 12 ] & $(y_{2}x_{2}^{-1})^{2}x_{2}^{-1}y_{1}^{2}x_{1}^{-1}y_{1}^{-1}x_{2}(x_{2}y_{2}^{-1})^{2}x_{1}y_{1}^{-1}$ \\\hline 
[ 23, 11, 13 ] & $x_{2}^{-1}y_{1}x_{1}^{-1}y_{2}x_{2}^{-1}y_{1}x_{1}^{-1}x_{2}^{-1}y_{1}^{2}x_{1}^{-1}y_{1}^{-1}x_{2}x_{1}y_{1}^{-1}x_{2}y_{2}^{-1}x_{1}y_{1}^{-1}x_{2}x_{1}y_{1}^{-1}$ \\\hline 
[ 23, 11, 16 ] & $y_{2}x_{2}^{-1}y_{1}y_{2}^2x_{2}^{-1}(y_{2}x_{2}^{-1}y_{1})^{2}x_{1}y_{1}x_{1}^{-1}(y_{1}^{-1}x_{2}y_{2}^{-1})^{2}x_{2}y_{2}^{-2}y_{1}^{-1}x_{2}y_{2}^{-1}y_{1}^{-1}$ \\\hline 
[ 23, 11, 18 ] & $(y_{2}x_{2}^{-1})^{2}x_{2}^{-1}y_{1}x_{1}y_{1}x_{1}^{-1}y_{1}^{-1}x_{2}(x_{2}y_{2}^{-1})^{2}y_{1}^{-1}$ \\\hline 
\end{tabular}
\caption{Tuples $[p,q,r]$, where $5 \leq q <p \leq 23$, in Example \ref{Sec:Ejemplo1} with a corresponding $w \in {\mathfrak C}_{0} \cap \ker(\theta)$}
\end{figure}
}
\end{center}

\begin{rema}
Let us note that these examples can be used in our algorithm (Theorem \ref{algoritmo}) to obtain more examples of free actions that extend but not to a handlebody.
\end{rema}

%%%%%%%%%%%%%%%
\subsection{Example 2}
Let  $G$ be the group generated by $\alpha,\beta,\gamma,\delta,$
together the following relations
$$\alpha^{3},\beta^{3},\gamma^{3},\delta^{3}, [\alpha,\beta]^{3}, [\alpha,\delta], [\gamma,\delta], [[\alpha,\beta],\alpha], [[\alpha,\beta],\beta], $$
$$\gamma^{-1}\alpha \gamma\beta^{-1}\alpha^{-1}\beta, \gamma^{-1}\beta \gamma\alpha^{-1}\beta^{-1}\alpha, \delta^{-1}\beta \delta\alpha^{-1}\beta^{-1}\alpha.$$
$$G = \langle \alpha,\beta \rangle \rtimes \langle \gamma,\delta\rangle \cong 
({\mathbb Z}_{3}^{2} \rtimes {\mathbb Z}_{3}) \rtimes  {\mathbb Z}_{3}^{2},$$
If we set $a=\alpha$, $b=\gamma$, $c=\beta$ and $d=\delta$, then $[a,b][c,d]=1$. 
In the GAP Library, this group corresponds to ${\rm SmallGroup}(3^{5},65)$ and (using GAP)  $B_{0}(G)=0$, so every free action of $G$ always extends. Next, we observe that all of those free actions (on genus $244$) necessarily extend to hadlebodies.

\begin{prop}
Every free action of  the group ${\rm SmallGroup}(3^{5},65)$ on genus $244$ extends to a handlebody.
\end{prop}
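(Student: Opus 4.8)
The plan is to reduce the statement, via part (1) of Theorem \ref{main0}, to a purely linear-algebraic assertion over ${\mathbb F}_{3}$. Since $G$ has odd order $3^{5}$, Theorem \ref{main0} applies, and a free action given by an epimorphism $\theta:F\to G$ extends to a handlebody precisely when ${\mathfrak C}\cap\ker(\theta)\neq\emptyset$. Because ${\mathfrak C}$ is the ${\rm Out}_{0}^{+}(F)$-orbit of $[x_{1},y_{1}]$, this happens exactly when some $\phi\in{\rm Out}_{0}^{+}(F)$ satisfies $[\theta\phi(x_{1}),\theta\phi(y_{1})]=1$ in $G$. So I must show that, for every epimorphism $\theta:F\to G$, the quadruple $(\theta(x_{1}),\ldots,\theta(y_{2}))$ can be moved by the generators $\sigma_{1},\ldots,\sigma_{5}$ to one whose first two entries commute.

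The key structural input I would establish first is that $G$ is the extraspecial group of order $3^{5}$ and exponent $3$. From the given relations one checks that $G$ has nilpotency class two with $[G,G]=Z(G)=\Phi(G)=\langle[\alpha,\beta]\rangle\cong{\mathbb Z}_{3}$, so that $V:=G/Z(G)\cong{\mathbb F}_{3}^{4}$ and the commutator descends to an alternating form $\omega$ on $V$, defined by $[u,v]=[\alpha,\beta]^{\omega(\bar u,\bar v)}$. A direct evaluation of the six values $\omega(\bar\alpha,\bar\beta),\ldots,\omega(\bar\gamma,\bar\delta)$ from the relations shows that $\omega$ is nondegenerate. Since $\Phi(G)=[G,G]$, a quadruple generates $G$ iff its image is a basis of $V$; hence for any epimorphism $\theta$ the induced map $\bar\theta:H_{1}(R;{\mathbb F}_{3})\to V$ is an isomorphism, and the condition $[\theta(x_{1}),\theta(y_{1})]=1$ is equivalent to $\omega^{*}(x_{1},y_{1})=0$, where $\omega^{*}:=\bar\theta^{*}\omega$ is a nondegenerate alternating form on $H_{1}(R;{\mathbb F}_{3})$. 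The defining relation $[x_{1},y_{1}][x_{2},y_{2}]=1$ forces $\omega^{*}(x_{1},y_{1})+\omega^{*}(x_{2},y_{2})=0$.

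Next I would use that $\sigma_{1},\ldots,\sigma_{5}$ generate ${\rm Out}^{+}(F)$ (Section \ref{geometrico}), so ${\rm Out}_{0}^{+}(F)$ acts on $H_{1}(R;{\mathbb F}_{3})$ through the full symplectic group ${\rm Sp}(4,{\mathbb F}_{3})$ preserving the intersection form $\langle\,,\rangle$. The task then becomes: given nondegenerate alternating forms $\langle\,,\rangle$ and $\omega^{*}$ on ${\mathbb F}_{3}^{4}$ with $\omega^{*}(x_{1},y_{1})+\omega^{*}(x_{2},y_{2})=0$, find $\phi\in{\rm Sp}(4,{\mathbb F}_{3})$ with $\omega^{*}(\phi x_{1},\phi y_{1})=0$. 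Writing $\omega^{*}(u,v)=\langle Tu,v\rangle$ with $T$ invertible and self-adjoint for $\langle\,,\rangle$, it suffices to produce a hyperbolic pair $e_{1},f_{1}$ (i.e.\ $\langle e_{1},f_{1}\rangle=1$) with $\omega^{*}(e_{1},f_{1})=\langle Te_{1},f_{1}\rangle=0$ and then extend it to a symplectic basis. Such a pair exists as soon as $e_{1}$ is not an eigenvector of $T$: then $\langle e_{1},\cdot\rangle$ and $\langle Te_{1},\cdot\rangle$ are independent functionals, so the two affine conditions on $f_{1}$ are simultaneously solvable. A non-eigenvector exists unless $T$ is scalar, i.e.\ unless $\omega^{*}=\lambda\langle\,,\rangle$; but then the constraint gives $2\lambda=0$, hence $\lambda=0$, contradicting nondegeneracy. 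This yields the required $\phi$, and with it the element of ${\mathfrak C}\cap\ker(\theta)$.

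The main obstacle I anticipate is the bookkeeping needed to verify the extraspecial structure, in particular that $\omega$ is nondegenerate (so that $Z(G)=[G,G]$) and that $G$ has exponent three (so that $\Phi(G)=[G,G]$ and generation is controlled by $V$); the linear-algebra step is then short, and the only delicate point there is that the relation $[x_{1},y_{1}][x_{2},y_{2}]=1$ is exactly what excludes the degenerate scalar case $\omega^{*}=\lambda\langle\,,\rangle$.
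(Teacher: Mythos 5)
Your proposal is correct, but it takes a genuinely different route from the paper's proof. Both arguments start from the same reduction, part (1) of Theorem \ref{main0}: the action induced by $\theta$ extends to a handlebody iff $\phi([x_{1},y_{1}])\in\ker(\theta)$ for some $\phi\in{\rm Out}_{0}^{+}(F)$. The paper then proceeds by computer: GAP shows that, up to ${\rm Aut}(G)$, only one non-commuting pair $(\theta(x_{1}),\theta(y_{1}))$ occurs, the $12{,}312$ remaining completions $(\theta(x_{2}),\theta(y_{2}))$ are enumerated, and for each one an explicit word in ${\mathfrak C}_{0}\cap\ker(\theta)$ is exhibited. You replace the search by structure theory: ${\rm SmallGroup}(3^{5},65)$ is extraspecial of exponent $3$ --- your claims that $[G,G]=Z(G)=\Phi(G)\cong{\mathbb Z}_{3}$ and that $\omega$ is nondegenerate do check out from the stated relations (the relations give $[\alpha,\gamma]=z$, $[\beta,\gamma]=[\beta,\delta]=z^{-1}$, $[\alpha,\delta]=[\gamma,\delta]=1$ where $z=[\alpha,\beta]$, and the resulting Gram matrix of $\omega$ in the basis $(\bar\alpha,\bar\beta,\bar\gamma,\bar\delta)$ has determinant $1\bmod 3$) --- so commutator-vanishing in $G$ is detected by a nondegenerate alternating form on $G/Z(G)\cong{\mathbb F}_{3}^{4}$, every epimorphism induces an isomorphism on ${\mathbb F}_{3}$-homology by the Burnside basis theorem, and since ${\rm Out}_{0}^{+}(F)$ acts on $H_{1}(R;{\mathbb F}_{3})$ through all of ${\rm Sp}(4,{\mathbb F}_{3})$, the question becomes pure symplectic linear algebra: find a hyperbolic pair for the intersection form that is isotropic for $\omega^{*}$. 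Your eigenvector argument settles this correctly, the scalar case $\omega^{*}=\lambda\langle\,,\rangle$ being excluded by the surface relation precisely because $2$ is invertible in ${\mathbb F}_{3}$. The trade-off: the paper's proof produces explicit certificate words but is computer-dependent and gives no insight into why the search always succeeds; yours is computer-free, isolates the mechanism (only the homological shadow of $\theta$ matters because $G$ has nilpotency class two), and generalizes verbatim to every extraspecial group of odd order $p^{5}$ and exponent $p$ (and, with a minor adjustment for degenerate $\omega^{*}$, to the Heisenberg groups of Example 4). Two small points you should make explicit in a final write-up: (i) a free action of $G$ in genus $244$ has quotient of genus $2$ by Riemann--Hurwitz, so it is indeed given by an epimorphism $\theta:F\to G$; and (ii) the surjectivity of ${\rm Out}_{0}^{+}(F)\to{\rm Sp}(4,{\mathbb F}_{3})$ rests on the classical facts that the mapping class group surjects onto ${\rm Sp}(4,{\mathbb Z})$ and that reduction ${\rm Sp}(4,{\mathbb Z})\to{\rm Sp}(4,{\mathbb F}_{3})$ is onto; these are standard but deserve citations, since Section \ref{geometrico} of the paper only records that the $\sigma_{j}$ generate ${\rm Out}^{+}(F)$ and that inner automorphisms act trivially on homology.
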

\begin{proof}
Let $\theta:F \to G$ be any surjective homomorphism and set $u=\theta(x_{1})$ and $v=\theta(y_{1})$. If $[u,v]=1$, then (by Theorem \ref{main0}) the free action of $G$ induced by $\theta$ extends to a handlebody. Let us assume now that $[u,v]\neq 1$. By computations with GAP, we may observe that, up to ${\rm Aut}(G)$, there is only one pair $(u,v) \in G \times G$ such that $[u,v] \neq 1$.  So, up to post-composition by an automorphism of $G$, we only need to consider those $\theta$ such that $\theta(x_{1})=a$ and $\theta(y_{1})=b$. If we set $r=\theta(x_{2})$ and $t=\theta(y_{2})$, then the pair $(r,t)$ satisfies the following
\begin{enumerate}
\item $[a,b][r,t]=1$,
\item  $G=\langle a,b,r,t\rangle$;
\end{enumerate}

As the elements in the set $\{x_{1}x_{2}, ,x_{1}y_{1}^{\pm 1},x_{1}y_{2}^{-1}, x_{2}y_{2}^{\pm 1}, x_{2}y_{1}^{-1}, y_{2}y_{1}\} \subset F$ represent essential simple loops, if   
$1 \in \{ur, uv^{\pm 1},ut^{-1}, rt^{\pm 1}, rv^{-1},tv\}$, the free action of $G$ induced by $\theta$ agains extends to a handlebody.

Let us now consider those pairs $(r,t)$ such that $1 \notin \{ur, uv^{\pm 1},ut^{-1}, rt^{\pm 1}, rv^{-1},tv\}$. Such a collection of pairs has cardinality $12.312$. Using GAP, for each of these pairs $(r,t)$ we were able to check that ${\mathfrak C}_{0} \cap \ker(\theta) \neq \emptyset$ and, again by Theorem \ref{main0}, such a free action extends to a handlebody.
\end{proof}

%%%%%%%%%%%%%%%
\subsection{Example 3}
Let us consider the group $G={\rm SmallGroup}(3^{5},28)$ which, by \cite{Samperton}, admits some free action on genus $244$ which does not extend. 

There are many tuples $(a,b,r,t)$ of elements of $G$ satisfying that $[a,b][r,t]=1$ and $G=\langle a,b,r,t\rangle$. 
For each such tuple $(a,b,r,t)$, let us consider the 
surjective homomorphisms
$\theta_{a,b,r,t}:F \to G$, defined by $\theta_{a,b,r,t}(x_{1})=a$, $\theta_{a,b,r,t}(y_{1})=b$, $\theta_{a,b,r,t}(x_{2})=r$ and $\theta_{a,b,r,t}(y_{2})=t$.

If $[a,b]=1$, then Theorem \ref{main0} asserts that the induced free action extends to a handlebody. 
So, let us assume, from now on, that $[a,b] \neq 1$.

The list of pairs 
$(a,b) \in G \times G$ with the property that $[a,b] \neq 1$ has cardinality $54.432$. Up to ${\rm Aut}(G)$, there are $96$ such pairs. For computations, we have fixed a particular pair $(a,b)$. For such fixed pair, we obtained $3.132$ other pairs $(r,t)$ such that (i) $[a,b][r,t]=1$ and $G=\langle a,b,r,t\rangle$. 
Next, for each of these $3.132$ tuples $(a,b,r,t)$, we used GAP to compute ${\mathfrak C}_{0} \cap \ker(\theta_{a,b,r,t})$ and obtained that only $1.188$ of them have non-empty intersections (so they extend to a handlebody). Note that there may be other possible outcomes that could occur for the cases for which ${\mathfrak C}_{0} \cap \ker(\theta_{a,b,r,t})=\emptyset$ (we may still have ${\mathfrak C} \cap \ker(\theta_{a,b,r,t}) \neq \emptyset$).

%%%%%%%%%%%%%%%
\subsection{Example 4}
Let $p \geq 3$ be a prime and consider the Heisenberg group (of order $p^{3}$)
$$G:=\langle x,y: x^{p}=y^{p}=[x,y]^{p}=[x,[x,y]]=[y,[x,y]]=1\rangle.$$

As $G$ is an extraspecial group, $B_{0}(G)=0$, so every free action of $G$ extends. Let us consider the case $p=3$. In this case, the list of pairs 
$(u,v) \in G \times G$ with the property that $[u,v] \neq 1$ has cardinality $432$ and, up to ${\rm Aut}(G)$, there is only one. One of these pairs is $(a,b)=(x,y)$. For such a pair, we obtain $189$ other pairs $(r,t)$ such that (i) $[a,b][r,t]=1$ and $G=\langle a,b,r,t\rangle$. For each $(r,t)$, we consider the surjective homomorphism $\theta_{a,b,r,t}:F \to G$, defined by $\theta_{a,b,r,t}(x_{1})=a$, $\theta_{a,b,r,t}(y_{1})=b$, $\theta_{a,b,r,t}(x_{2})=r$ and $\theta_{a,b,r,t}(y_{2})=t$. For each of them, we use GAP to check that ${\mathfrak C}_{0} \cap \ker(\theta_{a,b,r,t}) \neq \emptyset$. So, all of these free actions of $G$  extend to a handlebody.

%%%%%%%%%%%%%%%%%%%%%%
%%%%%%%%%%%%%%%%%%%%%
\section{Final Remark}
The following question was raised by the referee:

{\it What exactly is the smallest example of one of these groups that admits an extendable action (with a quotient of genus $2$) that does not extend to a handlebody?}
 
In Remark \ref{smallgroups}, we observed that every free action of a group of odd order at most $241$ always extends to a handlebody. So, the next odd order to search for an example, of a group that extends but not to handlebody, is $3^5=243$. In this case, we have $67$ non-isomorphic groups. From them, there are $50$ which are abelian-by-cyclic (so every free action extends to a handlebody). For the other $17$ groups, there are exactly three which admit free actions which do not extend (by Samperton's result) and, for all the others $14$, every free action extends (not necessarily to a handlebody). Anyway, all of these $17$ groups can be generated by four elements $a,b,c,d$ such that $[a,b]=1=[c,d]$, so each of them admits a free action that extends to a handlebody. In order to apply our algorithm, to these special examples, we need to know when to stop our searching (as ${\mathfrak C}$ is infinite); which is not known to the author.

We have provided an algorithm to produce free actions that extend but not to a handlebody. To produce such examples, we only need a free action $\theta:F \to G$, of a finite non-abelian group $G$ of odd order whose Sylow subgroups are abelian, such that $[x_{1},y_{1}] \notin \ker(\theta)$. Our algorithm makes use of a suitable fiber product construction to provide us with a finite group $G_{N}$ acting freely which extends but not to a handlebody. Unfortunately, to obtain $G_{N}$ we need to consider the intersection of all ${\rm Aut}^{+}(F)$-images of $\ker(\theta)$; which again it seems to be computationally expensive.

In particular, we have not been able to answer the above question.
In future work, we plan to find explicit examples to answer the aforementioned question.

%%%%%%%%%%%%%%%%%
\subsection*{Acknowledgements}
The author is indebted to E. Samperton and A. Carocca, for valuable discussions on preliminary versions of this paper. 
I want to express my gratitude to the referee for providing me with valuable feedback, suggestions, and corrections, and for an alternative proof of the ``only if" part of Proposition \ref{coromain0} which does not make use of Kleinian groups.

%%%%%%%%%%%%%%%%%%
%%%%%%%%%%%%%%%%%%

\end{document}